\theoremstyle{plain}
\newtheorem{theorem}{Theorem}[section]
\newtheorem{proposition}[theorem]{Proposition}
\newtheorem{lemma}[theorem]{Lemma}
\newtheorem{corollary}[theorem]{Corollary}
\numberwithin{equation}{section}
\theoremstyle{definition}
\newtheorem{definition}[theorem]{Definition}
\newtheorem{remark}[theorem]{Remark}
\newtheorem{example}[theorem]{Example}
\newtheorem{question}[theorem]{Question}
\newcommand{\C}{\mathbb{C}}
\newcommand{\Q}{\mathbb{Q}}
\newcommand{\R}{\mathbb{R}}
\newcommand{\Z}{\mathbb{Z}}
\newcommand{\cF}{\mathcal{F}}
\newcommand{\cI}{\mathcal{I}}
\newcommand{\RZ}{\mathbb{R}\mathcal{Z}}
\newcommand{\B}{\mathcal{B}}
\newcommand{\Hom}{\operatorname{Hom}}
\def\red#1{{\textcolor{red}{#1}}}
\DeclareMathOperator{\rank}{rank}
\DeclareMathOperator{\row}{row}
\DeclareMathOperator{\wed}{wed}
\DeclareMathOperator{\link}{Lk}
\DeclareMathOperator{\Tor}{Tor}
\begin{document}
\title{On the cohomology and their torsion of real toric objects}

\author[S.Choi]{Suyoung Choi}
\address{Department of Mathematics, Ajou University, San 5, Woncheondong, Yeongtonggu, Suwon 443-749, Korea}
\email{schoi@ajou.ac.kr}

\author[H.Park]{Hanchul Park}
\address{Department of Mathematics, Ajou University, San 5, Woncheondong, Yeongtonggu, Suwon 443-749, Korea}
\email{hpark@ajou.ac.kr}

\thanks{This research was supported by Basic Science Research Program through the National Research Foundation of Korea(NRF) funded by the Ministry of Science, ICT \& Future Planning(NRF-2012R1A1A2044990) and the TJ Park Science Fellowship funded by the POSCO TJ Park Foundation.}

\date{\today}

\subjclass[2010]{primary 57N65; secondary 57S17, 05E45}
\keywords{real toric manifold, small cover, real topological toric manifold, cohomology ring, odd torsion, nestohedron}

\begin{abstract}

    In this paper, we do the two things.
    \begin{enumerate}
        \item We present a formula to compute the rational cohomology ring of a real topological toric manifold, and thus that of a small cover or a real toric manifold, which implies the formula of Suciu and Trevisan. Furthermore, the formula also works for other coefficient $\Z_q = \Z/q\Z$, where $q$ is a positive odd integer.
        \item We construct infinitely many real toric manifolds and small covers whose integral cohomology have a $q$-torsion for any positive odd integer $q$.
    \end{enumerate}
\end{abstract}
\maketitle

\tableofcontents
\section{Introduction}
    A \emph{toric variety}, which arose in the field of algebraic geometry, of dimension $n$ is a normal algebraic variety with an algebraic action of a complex torus $(\C^\ast)^n$ having a dense orbit. A compact smooth toric variety is sometimes called a \emph{toric manifold}. By regarding $S^1$ as the unit circle in $\C^\ast$, there is a natural action of $T^n = (S^1)^n\subset (\C^\ast)^n$ on a toric variety. Motivated by toric manifolds, some topological generalization of toric manifolds have been introduced and investigated well in toric topology. Instead of an algebraic torus action on an algebraic variety, one could think of a smooth torus ($T^n$ or $(\C^\ast)^n$) action on a smooth manifold.
    A \emph{quasitoric manifold} introduced in \cite{DJ91} is a closed smooth $2n$-manifold $M$ with an effective $T^n$-action such that
    \begin{enumerate}
      \item the torus action is \emph{locally standard}: i.e., it is locally isomorphic to the standard action of $T^n$ on $\R^{2n}$, and
      \item the orbit space $M/T^n$ can be identified with a simple polytope.
    \end{enumerate}
    A \emph{topological toric manifold} defined in \cite{IFM12} is a closed smooth $2n$-manifold $M$ with an effective smooth $(\C^\ast)^n$-action such that there is an open and dense orbit and $M$ is covered by finitely many invariant open subsets each of which is equivariantly diffeomorphic to a smooth representation space of $(\C^\ast)^n$. It was shown by \cite{IFM12} that every toric manifold or quasitoric manifold is a topological toric manifold.

    While the above toric objects are being considered, the notions of ``real toric objects'' are also studied. Let $M$ be a toric variety of complex dimension $n$. Then there is a canonical involution, called the \emph{conjugation} of $M$. The set of its fixed points, denoted by $M_{\R}$, is a real subvariety of dimension $n$, called a \emph{real toric variety}. When $M$ is a toric manifold, then $M_{\R}$ is a submanifold of dimension $n$ and called a \emph{real toric manifold}. This concept can be generalized as follows.

    \begin{definition}[\cite{DJ91}]
        A closed smooth manifold of dimension $n$ with a locally standard action of $\Z_2^n = (S^0)^n \subset (\R^\ast)^n$ is called a \emph{small cover} if its orbit space can be identified with a simple polytope.
    \end{definition}

    \begin{definition}[\cite{IFM12}]
        A closed smooth manifold of dimension $n$ with an effective smooth action of $(\R^\ast)^n$ is called a \emph{real topological toric manifold} if one of its orbits is open and dense and it is covered by finitely many invariant open subsets each of which is equivariantly diffeomorphic to a direct sum of real one-dimensional smooth representation spaces of $(\R^\ast)^n$.
    \end{definition}

    Similarly to toric objects, real toric objects have inclusions as the following diagram:
    \begin{center}
    \begin{tikzpicture}[scale=0.165]
        %\draw[thick, rounded corners=20pt] (0,0) rectangle (80,50);
        %\draw (40,47) node {\large{Torus Manifold}};
        \draw[rounded corners=15pt] (6,4) rectangle (74,39);
        \draw (40,36) node {Real Topological Toric Manifold};
        \draw[rounded corners=10pt] (10,8) rectangle (52.5,30);
        \draw (20,20) node [align=center]{Small Cover};
        \draw[rounded corners=10pt] (29.5,12) rectangle (70,26);
        \draw (60,20) node [align=center]{Real Toric \\Manifold};
        \draw[rounded corners=5pt] (31.5,14) rectangle (50.5,24);
        \draw (41,19) node [align=center]{\footnotesize Real locus of \\ \footnotesize a Projective \\ \footnotesize Toric Manifold};
    \end{tikzpicture}
    \end{center}

    It should be noted that the formulas for the integral cohomology ring of toric objects have been well established in many literatures such as \cite{Da78}, \cite{DJ91}, \cite{MP06}. Interestingly, the formula is quite simple; according to the formula, the ring is obtained as a quotient of a polynomial ring generated by only degree $2$ elements, and it has no torsion.

    Nevertheless, only little is known about the integral cohomology rings of real toric objects. Only the $\Z_2$-cohomology ring is known in \cite{DJ91}. \footnote{The formula in Theorem~4.14 of \cite{DJ91} is only for small covers. However, it is not hard to show that its argument is also applicable for real topological toric manifolds.} For example, it has been an open problem for a long time whether there is a real toric object whose cohomology admits an odd torsion or not.

    \begin{question}\label{que:cohomofsmallcover}
        For a real toric manifold (or a real topological toric manifold generally) $M$, find a formula to compute the cohomology ring $H^\ast(M;\Z)$.
    \end{question}

    \begin{question}\label{que:oddtorsion}
        Is there a real toric manifold $M$ such that $H^\ast(M;\Z)$ has an odd torsion?
    \end{question}

    Recently, Suciu and Trevisan \cite{Su1}, \cite{Trev} have announced that they established the formula for the rational cohomology of real toric objects using their description as covering spaces of real Davis-Januskiewicz spaces.

    In this paper, we present a formula for the cohomology ring of a real toric object with coefficient $\Q$ or $\Z_q$ ($q$: positive odd integer) by studying its natural cell structure. The explicit formula is stated in Theorem~\ref{thm:main1}. Furthermore, Theorem~\ref{thm:main1} implies the beautiful formula of Suciu and Trevisan (\cite{Su1}, \cite{Trev}), which is the $\Q$-coefficient case of Theorem~\ref{thm:cohomofsmallcover}. These theorems provide a partial answer to Question~\ref{que:cohomofsmallcover}.

    Moreover, using Theorem~\ref{thm:cohomofsmallcover}, we give a negative answer to Question~\ref{que:oddtorsion} by constructing infinitely many real toric manifolds whose cohomology have odd torsions. In fact, our examples are actually the real loci of  projective toric manifolds.

    The paper is organized as follows. In Section~\ref{sec:rttm}, we define real topological toric manifolds and real moment-angle complexes and roughly review their relation. In Section~\ref{sec:cohomofrzp}, we refer L.~Cai's work \cite{Ca} on the integral cohomology of the real moment-angle complex and introduce a classical result for cohomology of a finite regular cover so called the transfer homomorphism. The results about cohomology of real toric objects is covered in Section~\ref{sec:cohomofsmallcover}. Finally, we construct in Section~\ref{sec:oddtorsion} a real toric manifold whose cohomology contains an arbitrary odd torsion.

\section{Real topological toric manifolds}\label{sec:rttm}
    A \emph{simplicial complex} $K$ on a finite set $V=V(K)$ is a collection of subsets of $V$ satisfying
    \begin{enumerate}
      \item if $v \in V$, then $\{ v \} \in K$, and
      \item if $\sigma \in K$ and $\tau \subset \sigma$, then $\tau \in K$.
    \end{enumerate}

    A simplicial complex $K$ is called a \emph{simplicial sphere} of dimension $n-1$ if its geometric realization $|K|$ is homeomorphic to a sphere $S^{n-1}$, and is said to be \emph{star-shaped} if there are an embedding of $|K|$ into $\R^n$ and a point $p \in \R^n$ such that any ray from $p$ intersects $|K|$ once and only once.

    For a simplicial complex $K$ on $V=[m] = \{1,2,\dotsc,m\}$, we construct a topological space, called a \emph{real moment-angle complex} $\RZ_K$, as follows.
    \[
        \RZ_K = \bigcup_{\sigma\in K} \left\{(x_1,\dotsc,x_m)\in (D^1)^m \mid x_i \in S^0 \text{ when }i\notin \sigma\right\},
    \]
    where $D^1=[0,1]$ is the unit interval and $S^0 = \{0,1\}$ is its boundary. It should be noted that $\RZ_K$ is a manifold if $K$ is a simplicial sphere, and there is a canonical $\Z_2^m$-action on $\RZ_K$ which comes from the symmetry of $(D^1)^m$. See Chapter 6 of \cite{BP} for details.

    \begin{definition}[\cite{IFM12}]
        A closed smooth manifold $M$ of dimension $n$ with an effective smooth action of $(\R^\ast)^n$ is called a \emph{real topological toric manifold} if one of its orbits is open and dense and $M$ is covered by finitely many invariant open subsets each of which is equivariantly diffeomorphic to a direct sum of real one-dimensional smooth representation spaces of $(\R^\ast)^n$, where $\R^\ast = \R \setminus \{ 0\}$.
    \end{definition}

    Note that a real topological toric manifold $M$ has a natural $\Z_2^n$-action as a subgroup of $(\R^\ast)^n$. Then, its orbit space can be regarded as a manifold with corners, and it admits a natural face structure. It is known that its face structure is that of a star-shaped simplicial sphere $K$ of dimension $n-1$. We remark that a vertex of $K$ corresponds to a submanifold $M_i$ of $M$, called a \emph{characteristic submanifold}, which is fixed by a $\Z_2$-subgroup of $\Z_2^n$. We define a map, called a \emph{characteristic function of $M$}, $\lambda\colon V=[m] \to \Hom(\Z_2,\Z_2^n)\cong \Z_2^n$ such that $\lambda(i)$ fixes $M_i$ for all $i \in [m]$. The characteristic function satisfies the following \emph{non-singularity condition}:
    \begin{equation}\label{eqn:nonsingular}
     \text{$\lambda(i_1),\dotsc,\lambda(i_\ell)$ are linearly independent in $\Z_2^n$ if $\{i_1, \ldots, i_\ell\} \in K$. }\tag{$\ast$}
    \end{equation}

    Conversely, we can construct a real topological toric manifold (as a $\Z_2^n$-manifold) from the pair of $K$ and $\lambda$ above. For convenience, a characteristic function $\lambda$ is frequently represented by an $(n \times m)$-matrix $\Lambda = \left(\lambda(1)\;\dotsb\;\lambda(m)\right)$, called a \emph{characteristic matrix}. Define a map $\theta\colon [m] \to \Z_2^m$ so that $\theta(i)$ is the $i$-th coordinate vector of $\Z_2^m$. Then the map $\Lambda$ (as a matrix multiplication) satisfies $\Lambda \circ \theta = \lambda$ and the group $\ker \Lambda \cong \Z_2^{m-n}$ freely acts to $\RZ_K$. It is possible to show that its orbit space $M(K, \lambda)$ is a real topological toric manifold whose characteristic function is $\lambda$. Furthermore, it is shown by \cite{IFM12} that $M(K,\lambda)$ is Davis-Januszkiewicz equivalent to $M$. In other words, any real topological toric manifold of dimension $n$ as a $\Z_2^n$-manifold is determined by a pair $(K,\lambda)$ of a star-shaped simplicial sphere $K$ on $V$ of dimension $n-1$ and a map $\lambda \colon V \to \Z_2^n$ satisfying \eqref{eqn:nonsingular}.
    %Throughout this note, we assume that $K$ is a star-shaped simplicial sphere of dimension $n-1$ on $V=[m]$, and $\lambda \colon [m] \to \Z_2^n$ satisfying \eqref{eqn:nonsingular}.

    We remark that when $K$ is realizable as the dual complex of a simple polytope $P$, $M(K,\lambda)$ is known as a small cover (see \cite{DJ91}). In the case of the small cover, we do not distinguish the characteristic function $\lambda\colon V \to \Z_2^n$ from the characteristic function $\lambda\colon \cF \to \Z_2^n$ in the sense of \cite{DJ91} where $\cF$ is the set of facets of $P$. Note that the dual map is a bijection from $\cF$ to $V$.

%    Conversely, consider a map $\lambda\colon \cF \to \Z_2^n$ satisfying ($\ast$). We can construct a space
%    \begin{equation}\label{eqn:MPlambda}
%        M(P,\lambda) = \Z_2^n \times P / \sim,
%    \end{equation}
%    where $(t,p)\sim (s,q)$ if and only if $p=q$ and $t^{-1}s$ is in the subgroup of $\Z_2^n$ spanned by $\lambda(F)$ for $p\in F$. It is well known that $M(P,\lambda)$ is indeed a small cover which is Davis-Januszkiewicz equivalent to $M$.
%
%    For a given simple polytope $P$ with facets $F_1,\dotsc,F_m$, define $\theta\colon \cF \to \Z_2^m$ so that $\theta(F_i)$ is the $i$-th coordinate vector of $\Z_2^m$. Similar to \eqref{eqn:MPlambda}, one can construct a space
%    \begin{equation}
%        \RZ_P = \Z_2^m \times P / \sim,
%    \end{equation}
%    where $(t,p)\sim (s,q)$ if and only if $p=q$ and $t^{-1}s$ is in the subgroup of $\Z_2^m$ spanned by $\theta(F)$ for $p\in F$. This is actually a smooth $n$-manifold known as the \emph{real moment-angle manifold over $P$}.

\section{Cohomology of a real moment-angle complex}\label{sec:cohomofrzp}
    In this section, we briefly review a work of Cai \cite{Ca} describing the integral cohomology ring of $\RZ_K$. Regarding $D^1 = [0,1]$ as a CW complex consisting of two 0-cells $0,1$ and one 1-cell $\underline{01}$, the $m$-cube $(D^1)^m$ has a natural CW structure coming from the product operation. More precisely, let $D_i^1\cong [0,1]$ be the $i$-th factor of $(D^1)^m = D_1^1\times\dotsb\times D_m^1$ which is a CW complex with two 0-cells $0_i, 1_i$ and one 1-cells $\underline{01}_i$. Then every cell of $(D^1)^m$ is given as
    \[
        e_1\times\dotsb\times e_m, \quad e_i=0_i,\;1_i\;\text{or }\underline{01}_i.
    \]
    In this setting, observe that $\RZ_K$ is a subcomplex of $(D^1)^m$ and a cell $e = e_1\times\dotsb\times e_m$ of $(D^1)^m$ is a cell of $\RZ_K$ if and only if $\sigma_e := \{i\mid e_i = \underline{01}_i\}$ is a face of $K$ (see Lemma~1.2 of \cite{Ca}). Using this cell structure of $\RZ_K$, we are going to compute its cellular cohomology. When $X$ is a CW complex, let us denote by $C^*(X)$ be the cellular cochain complex of $X$. We equip $C^*(D_1^1)\otimes\dotsb\otimes C^*(D_m^1)$ with a differential $d$ such that
    \[
        d(e_1^*\otimes\dotsb\otimes e_m^*) = \sum_{i=1}^m(-1)^{\sum_{j=1}^{i-1}\deg e_j^*} e_1^*\otimes\dotsb\otimes e_{i-1}^*\otimes de_i^*\otimes e_{i+1}^*\otimes\dotsb\otimes e_m^*,
    \]
    where $e_i^*$ is the cochain dual to the cell $e_i$. Note that $d0_i^* = -\underline{01}_i^*$, $d1_i^* = \underline{01}_i^*$, and $d\underline{01}_i^* = 0$.
    Then we have a graded ring isomorphism
    \[
    \begin{array}{ccc}
        C^*((D^1)^m) & \to & C^*(D_1^1)\otimes\dotsb\otimes C^*(D_m^1) \\
        (e_1\times\dotsb\times e_m)^* &\mapsto & e_1^*\otimes\dotsb\otimes e_m^*
    \end{array}
    \]
    which preserves the differential. Thus from now on we will identify the cochain complex $C^*((D^1)^m)$ with $(C^*(D_1^1)\otimes\dotsb\otimes C^*(D_m^1),d)$.

    Now we perform a basis change of $C^*(D_i^1)$ by
    \begin{equation}\label{eqn:changeofbasis}
        \mathbf{1}_i = 0_i^*+1_i^*,\; t_i = 1_i^*,\; u_i = \underline{01}_i^*.
    \end{equation}
    From the definition of cup products, one can easily check the following relations:
    \begin{multline*}
        \mathbf{1}_i\cup t_i = t_i\cup\mathbf{1}_i,\quad \mathbf{1}_i\cup u_i = u_i\cup\mathbf{1}_i, \quad t_i\cup t_i = t_i, \\
        u_i\cup u_i = 0, \quad t_i\cup u_i = 0, \quad u_i\cup t_i = u_i.
    \end{multline*}

    We will use the notation $u_\sigma$ (respectively, $t_\sigma$) for the monomial $u_{i_1}\ldots u_{i_k}$ (respectively, $t_{i_1}\ldots t_{i_k}$) where $\sigma = \{i_1,\dotsc,i_k\}$, $i_1 <\dotsb < i_k$, is a subset of $[m]$. Let $\Z[u_1,\dotsc,u_m;t_1,\dotsc,t_m]$ be the differential graded ring with $2m$ generators such that
    \[
        \deg u_i = 1,\; \deg t_i = 0, \; du_i = 0, \; dt_i = u_i
    \]
    and the differential $d$ satisfies the Leibniz rule $d(ab) = da\cdot b + (-1)^{\deg a}a\cdot db$. Let $R$ be the quotient of $\Z[u_1,\dotsc,u_m;t_1,\dotsc,t_m]$ under the following relations
    \begin{multline*}
        u_it_i=u_i,\quad t_iu_i=0,\quad u_it_j = t_ju_i,\quad t_it_i = t_i,\\
         u_iu_i=0,\quad u_iu_j=-u_ju_i,\quad t_it_j=t_jt_i,
    \end{multline*}
    for $i,j=1,\dotsc,m$ and $i\ne j$. The Stanley-Reisner ideal $\mathcal{I}$ is the ideal generated by all square-free monomials $u_\sigma$ such that $\sigma$ is not a simplex of $K$. Note that, as an abelian group, $R/\mathcal{I}$ is freely spanned by the square-free monomials $u_\sigma t_{\omega\setminus \sigma}$, where $\sigma\subseteq \omega \subseteq [m]$ and $\sigma \in K$. %For the rest of this paper, we assume that a monomial has this form (possibly with a coefficient).

%    $u_{i_1}\dotsb u_{i_k}t_{j_1} \dotsb t_{j_\ell}$ where $i_1<\dotsb< i_k$, $j_1<\dotsb<j_\ell$, $i_\alpha\ne j_\beta$ and $F_{i_1}\cap\dotsb\cap F_{i_k}\ne\varnothing$.
    In his paper \cite{Ca}, Cai showed that there is a graded ring isomorphism preserving the differentials
    \[
        R/\mathcal{I} \cong C^\ast(\RZ_K)
    \]
    and thus
    \begin{theorem}[\cite{Ca}]
        There is a graded ring isomorphism
        \[
            H(R/\mathcal{I}, d) \cong H^*(\RZ_K).
        \]
    \end{theorem}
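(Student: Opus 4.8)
The plan is to deduce the theorem from a stronger, cochain-level statement: a degree- and differential-preserving ring isomorphism $R/\mathcal{I} \cong C^*(\RZ_K)$. Once this is in hand the theorem follows immediately, because passing to cohomology is a functor on differential graded rings and so carries a DG-ring isomorphism of cochain complexes to a graded ring isomorphism $H(R/\mathcal{I}, d) \cong H^*(\RZ_K)$. Thus essentially all the work is to produce the cochain-level isomorphism, which I would establish in two steps.

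First I would prove a DG-ring isomorphism $R \cong C^*((D^1)^m)$, \emph{before} passing to the subcomplex. Under the identification $C^*((D^1)^m) \cong C^*(D_1^1) \otimes \dotsb \otimes C^*(D_m^1)$ together with the basis change \eqref{eqn:changeofbasis}, each tensor factor contributes the single-index data $\mathbf{1}_i, t_i, u_i$ with $\mathbf{1}_i = 0_i^* + 1_i^*$ acting as the unit. The key point is that the relations defining $R$ are exactly the full list of multiplicative relations in this tensor product: the intra-factor cup products give $t_i^2 = t_i$, $u_i^2 = 0$, $t_i u_i = 0$, $u_i t_i = u_i$, while the Koszul sign rule for a tensor product of cochain algebras yields $u_i u_j = -u_j u_i$ and $u_i t_j = t_j u_i$, $t_i t_j = t_j t_i$ for $i \ne j$. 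Matching these against the defining relations shows that $u_i \mapsto \underline{01}_i^*$, $t_i \mapsto 1_i^*$ is a well-defined ring homomorphism; comparing monomial bases (both sides are free of rank $3^m$, the factor $\mathbf{1}_i$ accounting for the indices where neither $u_i$ nor $t_i$ occurs) shows it is an isomorphism. The differentials agree because $d\,1_i^* = \underline{01}_i^*$ and $d\,\underline{01}_i^* = 0$, i.e. $dt_i = u_i$ and $du_i = 0$.

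Second I would pass to the subcomplex $\RZ_K \subseteq (D^1)^m$. Since $\RZ_K$ is a cellular subcomplex, restriction $C^*((D^1)^m) \to C^*(\RZ_K)$ is a surjection of DG-rings whose kernel is spanned by the duals of cells not in $\RZ_K$, that is, cells $e$ with $\sigma_e \notin K$. To identify this kernel with $\mathcal{I}$, I would compute the pairing of a basis monomial $u_\sigma t_\tau$ (with $\sigma,\tau$ disjoint) against a cell $e = e_1 \times \dotsb \times e_m$: since $\mathbf{1}_i$ pairs to $1$ on either $0$-cell of the $i$-th factor and $0$ on $\underline{01}_i$, while $u_i$ pairs nontrivially only with $\underline{01}_i$ and $t_i$ only with $1_i$, the monomial $u_\sigma t_\tau$ pairs nontrivially precisely with the cells $e$ satisfying $\sigma_e = \sigma$. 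Such a cell lies in $\RZ_K$ if and only if $\sigma \in K$, so $u_\sigma t_\tau$ restricts to zero on $C^*(\RZ_K)$ exactly when $\sigma \notin K$. Hence the kernel is the $\Z$-span of all $u_\sigma t_\tau$ with $\sigma \notin K$, and because $K$ is closed under taking subsets this span is precisely the ideal $\mathcal{I}$ generated by the nonface monomials $u_\sigma$. This gives $R/\mathcal{I} \cong C^*(\RZ_K)$, and taking cohomology completes the proof.

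I expect the main obstacle to be the second step, specifically the clean identification of the restriction kernel with $\mathcal{I}$. The subtlety is that the convenient basis $\{\mathbf{1}_i, t_i, u_i\}$ does not consist of single cell-duals -- $\mathbf{1}_i$ is a sum of two $0$-cell duals -- so one cannot read off the kernel cell-by-cell but must instead carry out the pairing computation above, and then check separately that the $\Z$-span of the vanishing monomials coincides with the ideal generated by the nonfaces, a step that uses that $K$ is a simplicial complex. Verifying in the first step that the intra-factor relations and Koszul signs exhaust the defining relations of $R$ is also required, but this is a finite, factor-by-factor check once the sign conventions are fixed.
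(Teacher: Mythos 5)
Your proposal is correct and takes essentially the same route as the paper: the theorem is deduced from the cochain-level, differential-preserving ring isomorphism $R/\mathcal{I}\cong C^\ast(\RZ_K)$, which the paper simply cites from \cite{Ca}, while you reconstruct Cai's argument for it (tensor decomposition of $C^\ast((D^1)^m)$ under the basis change \eqref{eqn:changeofbasis}, then identification of the kernel of restriction to the subcomplex $\RZ_K$ with the Stanley--Reisner ideal $\mathcal{I}$, using that $K$ is closed under subsets). The only point left tacit, in your write-up as in the paper, is that the cochain-level product being used (fixed by a cellular diagonal approximation on the cube) does induce the topological cup product on $H^\ast(\RZ_K)$; this is exactly what the citation to \cite{Ca} supplies.
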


    We close this section with the following classical fact which is used later.
    \begin{theorem}[See III.2 of \cite{Br} or Section~3.G of \cite{Ha} for example]\label{thm:cohomoforbit}
        Let $X$ be a closed manifold and $\Gamma$ a finite group freely acting on $X$. Then there is a graded ring isomorphism
        \[
            H^\ast(X/\Gamma;G)\cong H^\ast(X;G)^\Gamma,
        \]
        when $G$ is a field of characteristic $0$ or coprime to $|\Gamma|$.
    \end{theorem}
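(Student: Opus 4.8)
The plan is to exploit the fact that the free action makes the quotient map a regular covering and then to run the standard transfer argument, which handles the ring structure painlessly. First I would observe that since $\Gamma$ acts freely on the closed manifold $X$, the projection $\pi\colon X \to X/\Gamma$ is a regular covering map of degree $|\Gamma|$ with deck transformation group $\Gamma$. Consequently $\pi^*\colon H^*(X/\Gamma;G) \to H^*(X;G)$ is a graded ring homomorphism, and because $\pi\circ g = \pi$ for every $g\in\Gamma$ we have $g^*\circ\pi^* = \pi^*$, so the image of $\pi^*$ automatically lands inside the invariant subring $H^*(X;G)^\Gamma$. The entire content of the theorem is therefore that $\pi^*$, corestricted to the invariants, is an isomorphism of graded rings.

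The key device is the \emph{transfer homomorphism} $\tau\colon H^*(X;G) \to H^*(X/\Gamma;G)$ associated to the finite covering $\pi$. I would recall its two defining composition identities:
\[
    \tau\circ\pi^* = |\Gamma|\cdot\mathrm{id}_{H^*(X/\Gamma;G)}, \qquad \pi^*\circ\tau = \sum_{g\in\Gamma} g^*\colon H^*(X;G)\to H^*(X;G).
\]
These are exactly the classical relations proved in III.2 of \cite{Br} (or Section~3.G of \cite{Ha}); the first says that transfer followed by restriction multiplies by the number of sheets, and the second says that restriction followed by transfer is the averaging-type sum over the group.

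Now the hypothesis on $G$ enters decisively: since $G$ is a field whose characteristic is $0$ or coprime to $|\Gamma|$, the integer $|\Gamma|$ is invertible in $G$. Injectivity of $\pi^*$ is then immediate, for if $\pi^*x = 0$ then $|\Gamma|\,x = \tau\pi^*x = 0$, whence $x=0$. Surjectivity onto the invariants follows from the second identity: given $y\in H^*(X;G)^\Gamma$, we have $g^*y = y$ for all $g$, so $\pi^*(\tau y) = \sum_{g\in\Gamma} g^*y = |\Gamma|\,y$, and therefore $y = \pi^*\bigl(|\Gamma|^{-1}\tau y\bigr)$ lies in the image of $\pi^*$. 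Thus $\pi^*\colon H^*(X/\Gamma;G)\to H^*(X;G)^\Gamma$ is a bijective, degree-preserving ring homomorphism, hence a graded ring isomorphism, as claimed.

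The step I expect to be the genuine crux is the second composition identity $\pi^*\circ\tau = \sum_{g\in\Gamma} g^*$, since the transfer is not itself a ring map and its interaction with the deck action must be set up carefully; the multiplicativity used to conclude the ring isomorphism comes entirely from $\pi^*$, which is why I route the argument through $\pi^*$ rather than through $\tau$. I note that an alternative proof would run the Cartan–Leray spectral sequence $H^p(\Gamma; H^q(X;G)) \Rightarrow H^{p+q}(X/\Gamma;G)$ and observe that $H^p(\Gamma;-)$ vanishes for $p>0$ once $|\Gamma|$ is invertible in $G$, collapsing the sequence onto $H^0(\Gamma;H^*(X;G)) = H^*(X;G)^\Gamma$; but the transfer argument above is more direct and keeps the ring structure manifest, so I would present that one.
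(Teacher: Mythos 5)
Your argument is correct and is essentially the paper's own route: the paper cites the result as classical and then records exactly the transfer homomorphism $\mu^\ast$ (your $\tau$) with the two composition identities $\mu^\ast\pi^\ast = |\Gamma|$ on $H^\ast(X/\Gamma;G)$ and $\pi^\ast\mu^\ast = |\Gamma|$ on $H^\ast(X;G)^\Gamma$, the latter being precisely your $\pi^\ast\circ\tau = \sum_{g\in\Gamma}g^\ast$ restricted to invariants, from which invertibility of $|\Gamma|$ in $G$ yields the isomorphism just as you argue. Your observation that the multiplicativity lives entirely in $\pi^\ast$ is the right point of care, and the spectral-sequence remark is a fine alternative but not what the paper uses.
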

    More precisely, let us assume that $X$ and $X/\Gamma$ are CW-complexes and the orbit map $\pi\colon X \to X/\Gamma$ preserves cells. Define a chain map of cellular chain complexes $\mu\colon C_\ast(X/\Gamma)\to C_\ast(X)$ such that $\mu(\pi(c)) = \sum_{g\in\Gamma} g\cdot c$ for any $c\in C_\ast(X)$ and hence one has the \emph{transfer homomorphism} $\mu^\ast\colon H^\ast(X;G) \to H^\ast(X/\Gamma;G)$ for any coefficient group $G$, such that
    \[
        \mu^\ast\pi^\ast = |\Gamma|\colon H^\ast(X/\Gamma;G) \to H^\ast(X/\Gamma;G)
    \]
    and
    \[
        \pi^\ast\mu^\ast = |\Gamma|\colon H^\ast(X;G)^\Gamma \to H^\ast(X;G)^\Gamma.
    \]
    From this fact, one observes
    \begin{corollary}\label{cor:cohomoforbitwhenGistwopower}
        Theorem~\ref{thm:cohomoforbit} also holds for any cyclic group $G=\Z_q$ of odd order if $|\Gamma|$ is a power of 2.
    \end{corollary}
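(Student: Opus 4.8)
The plan is to observe that the transfer argument recalled just before the corollary goes through verbatim, and that the only role played by the hypothesis ``$G$ is a field'' in Theorem~\ref{thm:cohomoforbit} is to make $|\Gamma|$ invertible in the coefficients. Since the corollary assumes $|\Gamma|$ is a power of $2$ and $G=\Z_q$ with $q$ odd, invertibility of $|\Gamma|$ persists even though $\Z_q$ need not be a field when $q$ is composite.

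First I would recall the two transfer identities already stated in the excerpt, namely $\mu^\ast\pi^\ast=|\Gamma|$ on $H^\ast(X/\Gamma;G)$ and $\pi^\ast\mu^\ast=|\Gamma|$ on $H^\ast(X;G)^\Gamma$. As emphasized there, these hold for an \emph{arbitrary} coefficient group $G$, so in particular for $G=\Z_q$. Next, writing $|\Gamma|=2^k$, I would note that $\gcd(2^k,q)=1$ because $q$ is odd, so $|\Gamma|$ is a unit in the ring $\Z_q$; denote its inverse by $|\Gamma|^{-1}\in\Z_q$. Consequently multiplication by $|\Gamma|$ is an isomorphism of $\Z_q$-modules both on $H^\ast(X/\Gamma;\Z_q)$ and on the invariant subring $H^\ast(X;\Z_q)^\Gamma$.

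Then I would conclude exactly as in the field case. The map $\pi^\ast\colon H^\ast(X/\Gamma;\Z_q)\to H^\ast(X;\Z_q)^\Gamma$ admits the two-sided inverse $|\Gamma|^{-1}\mu^\ast$ by the two identities above, hence is a bijection of graded $\Z_q$-modules. Since $\pi^\ast$ is induced by the continuous orbit map it is automatically a homomorphism of graded rings, and a bijective ring homomorphism is a ring isomorphism; this yields the claimed graded ring isomorphism $H^\ast(X/\Gamma;\Z_q)\cong H^\ast(X;\Z_q)^\Gamma$.

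There is essentially no technical obstacle here: the entire content is the remark that fieldness was never used beyond invertibility of the group order, and that a power of $2$ remains invertible modulo any odd $q$. The one point worth double-checking is that the image of $\pi^\ast$ indeed lands in the invariants $H^\ast(X;\Z_q)^\Gamma$, so that $\pi^\ast$ and $|\Gamma|^{-1}\mu^\ast$ are mutually inverse maps between the correct objects; this is immediate from $\pi\circ g=\pi$ for every $g\in\Gamma$.
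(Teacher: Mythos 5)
Your proof is correct and takes essentially the same approach as the paper: both arguments rest on the transfer identities $\mu^\ast\pi^\ast=|\Gamma|$ and $\pi^\ast\mu^\ast=|\Gamma|$, valid over any coefficient group, together with the invertibility of $|\Gamma|=2^k$ in $\Z_q$ (the paper exhibits the inverse via the Fermat--Euler theorem, you via $\gcd(2^k,q)=1$ --- the same observation). If anything, your write-up is slightly more careful than the paper's in identifying $\pi^\ast$ as the graded ring homomorphism whose two-sided inverse is $|\Gamma|^{-1}\mu^\ast$, rather than speaking of multiplication by $|\Gamma|$ itself as the isomorphism.
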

    \begin{proof}
        It is enough to show that the maps $\mu^\ast\pi^\ast$ and $\pi^\ast\mu^\ast$ represented by the multiple $|\Gamma|$ are graded ring isomorphisms respectively. Recall the Fermat-Euler theorem which states that if $q$ and $a$ are coprime positive integers, then
        \[
            a^{\phi(q)}\equiv 1 \pmod q,
        \]
        where $\phi$ is Euler's phi function. Substituting $|\Gamma|$ for $a$ completes the proof since the map $|\Gamma|$ is invertible.
    \end{proof}
%    \begin{corollary}\label{cor:torsionofcover}
%        Let $p$ be a prime not dividing $|\Gamma|$. Then there is a $p$-torsion in $H^\ast(X/\Gamma;\Z)$ if and only if there is a $p$-torsion in $H^\ast(X;\Z)^\Gamma$.
%    \end{corollary}
%    \begin{proof}
%        We choose $G=\Z$ as the coefficient group. Suppose that there is an element $a \in H^\ast(X/\Gamma;\Z)$ such that $pa = 0 $, and $na\ne 0$ for $n=1,\dotsc,p-1$ and consider the element $\mu^\ast(a) \in H^\ast(X/\Gamma;\Z)$.
%        \begin{enumerate}
%            \item $p\mu^\ast(a) = \mu^\ast(pa) = \mu^\ast(0) = 0$.
%            \item Let $n$ be an integer such that $1\le n \le p-1$. If $n\mu^\ast(a) = 0$, then $\pi^\ast(n\mu^\ast(a)) = \pi^\ast \mu^\ast(na) = |\Gamma|na = 0$. But since $p$ and $|\Gamma|n$ is relatively prime, one deduces $a = 0$ which is a contradiction. Thus $n\mu^\ast(a) \ne 0$.
%        \end{enumerate}
%        By the two conditions above, we conclude that $\mu^\ast(a)$ is a $p$-torsion element. The proof of the converse goes exactly the same way.
%    \end{proof}
\section{Cohomology of a real topological toric manifold}\label{sec:cohomofsmallcover}

    Now let us consider the $\Z_2^m$-action on the cellular cochain complex $C^\ast(\RZ_K)$. Let $a_i$ be the $i$-th coordinate vector of $\Z_2^m$ for $1\le i\le m$. Then it is immediate from \eqref{eqn:changeofbasis} to observe that
    \begin{equation}\label{eqn:z2maction}
        a_i\cdot \mathbf{1}_j = \mathbf{1}_j, \quad a_i\cdot t_j = \left\{
                                                                     \begin{array}{ll}
                                                                       t_j, & \hbox{$i\ne j$;} \\
                                                                       \mathbf{1}_j-t_j, & \hbox{$i=j$,}
                                                                     \end{array}
                                                                   \right.
    \quad a_i\cdot u_j = \left\{
                                                                     \begin{array}{ll}
                                                                       u_j, & \hbox{$i\ne j$;} \\
                                                                       -u_j, & \hbox{$i=j$.}
                                                                     \end{array}
                                                                   \right.
    \end{equation}

    Recall that the group $\Gamma = \ker \Lambda\cong \Z_2^{m-n}$ freely acts to $\RZ_K$ and the action naturally induces  a $\Gamma$-action on $C^\ast(\RZ_K)\cong R/\mathcal{I}$. We define a bijection map $\varphi\colon \Z_2^m  \to 2^{[m]} $ such that the $i$-th entry of $v\in \Z_2^m$ is nonzero if and only if $i\in \varphi(v)$, where $2^{[m]}$ denotes the power set of $[m]$.

%    By relabeling facets such that $F_1\cap\dotsb\cap F_n \ne\varnothing$ and choosing a suitable basis of $\Z_2^n$, the matrix $\Lambda$ looks like
%    \[
%        \Lambda = \left(\begin{array}{ccc|ccc}
%            1 & & 0 & & & \\
%             & \ddots & &v_1 &\cdots & v_{m-n} \\
%            0 & & 1 & & &
%        \end{array}\right)_{n\times m}
%    \]
%    for some column vectors $v_i$'s. The space $\ker \Lambda$ is the column space of the following matrix
%    \[
%        \begin{pmatrix}
%            & & \\
%            v_1 & \cdots & v_{m-n} \\
%            & & \\ \hline \\
%            1 & & 0 \\
%             & \ddots & \\
%             0 & & 1
%        \end{pmatrix}_{m \times (m-n)}.
%    \]
    \begin{lemma}\label{lem:rowlambda}
        Let $\row\Lambda $ be the row space of $\Lambda$ and $v$ a vector in $\Z_2^m$. Then $v\in\row\Lambda$ if and only if $| \varphi(v)\cap\varphi(u)|$ is even for all $u \in \ker \Lambda$.
    \end{lemma}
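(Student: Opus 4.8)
The plan is to recognize the stated condition as the assertion that $\row\Lambda$ is the orthogonal complement of $\ker\Lambda$ with respect to the standard symmetric bilinear form on $\Z_2^m$, and then to prove this orthogonal-complement identity by a single inclusion together with a dimension count.

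First I would translate the combinatorial hypothesis into the language of the dot product. For $v,u\in\Z_2^m$, the integer $|\varphi(v)\cap\varphi(u)|$ counts the indices $i$ at which both $v_i$ and $u_i$ are nonzero, so its parity equals $\sum_{i=1}^m v_i u_i$ computed in $\Z_2$; that is, $|\varphi(v)\cap\varphi(u)|$ is even if and only if $\langle v,u\rangle = 0$, where $\langle\,,\rangle$ denotes the standard bilinear form. Hence the right-hand condition, that $|\varphi(v)\cap\varphi(u)|$ be even for every $u\in\ker\Lambda$, says precisely that $v\in(\ker\Lambda)^\perp$. The lemma thus reduces to the identity $\row\Lambda = (\ker\Lambda)^\perp$.

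Next I would establish the easy inclusion $\row\Lambda\subseteq(\ker\Lambda)^\perp$. If $r$ is any row of $\Lambda$ and $u\in\ker\Lambda$, then $\langle r,u\rangle$ is exactly the entry of the column vector $\Lambda u = 0$ corresponding to that row, so $\langle r,u\rangle = 0$; since the rows span $\row\Lambda$ and orthogonality is linear, every element of $\row\Lambda$ is orthogonal to all of $\ker\Lambda$. Finally I would match dimensions: over the field $\Z_2$ one has $\dim\row\Lambda = \rank\Lambda$ and $\dim\ker\Lambda = m-\rank\Lambda$, while nondegeneracy of the standard form gives $\dim(\ker\Lambda)^\perp = m-\dim\ker\Lambda = \rank\Lambda$. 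The inclusion of the previous step is therefore between two subspaces of equal finite dimension, forcing equality $\row\Lambda = (\ker\Lambda)^\perp$, which is the claim.

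The one point demanding slight care is the dimension formula $\dim W + \dim W^\perp = m$: over $\Z_2$ a subspace can be isotropic, so $W\cap W^\perp\ne 0$ in general, and one must invoke nondegeneracy of the form rather than any splitting $\Z_2^m = W\oplus W^\perp$ to obtain the complementary dimension. Granting that standard fact, there is no genuine obstacle here; the entire content of the lemma is the translation of the parity condition into orthogonality, after which it is pure linear algebra over $\Z_2$.
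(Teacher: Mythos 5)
Your proof is correct and follows the same route as the paper: the paper's entire proof is the chain of equivalences $v\in\row\Lambda \Leftrightarrow v\perp\ker\Lambda \Leftrightarrow v\cdot u=0$ for all $u\in\ker\Lambda \Leftrightarrow |\varphi(v)\cap\varphi(u)|$ even, taking the identity $\row\Lambda=(\ker\Lambda)^\perp$ as standard. You simply supply the details the paper omits, namely the easy inclusion plus the dimension count via nondegeneracy of the form over $\Z_2$ (a point worth the care you give it, since the form is not anisotropic there).
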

    \begin{proof} The lemma holds by the following observation;
        \begin{align*}
            v \in \row \Lambda
            &\Longleftrightarrow v\perp \ker \Lambda \\
            &\Longleftrightarrow v\cdot u = 0 \text{ for all } u\in \ker \Lambda \\
            &\Longleftrightarrow |\varphi(v)\cap \varphi(u)|\text{ is even for all } u\in \ker \Lambda.
        \end{align*}
    \end{proof}

    For $p \in R/\mathcal{I}$, we assign a $\Gamma$-invariant element of $R/\mathcal{I}$
    \[
        N(p) := \sum_{g\in \Gamma} g\cdot p.
    \]
    For a monomial $ x = u_\sigma t_{\omega\setminus \sigma}\in R/\mathcal{I}$ for $\sigma \subseteq \omega$, let us use the notation $b(x)=\omega$ for the union of all subscripts in $x$.  We give a partial order between monomials of $R/\cI$ such that $x \le y$ if and only if $b(x)\subseteq b(y)$. For a given polynomial $p \in R/\cI$, a term $x$ of $p$ is called \emph{maximal} in $p$ if it is maximal among the (nontrivial) terms of $p$.
    %Given a nonzero polynomial $p = \sum_i a_i x_i \in R/\cI$, $a_i\ne 0$, a monomial $x_i$ is \emph{maximal} if .
    \begin{lemma}\label{lem:evenodd}
        Let $x$ be a monomial in $R/\cI$. In the polynomial $N(x)$, the coefficient of the term $x$ is nonzero if and only if $\varphi^{-1}(b(x)) \in \row \Lambda$. In that case, $|\Gamma|x= 2^{m-n}x$ is the unique maximal term in $N(x)$.
    \end{lemma}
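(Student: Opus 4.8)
The plan is to compute $N(x)$ explicitly for a monomial $x = u_\sigma t_{\omega\setminus\sigma}$ (so that $b(x)=\omega$) by first determining the action of a single element of $\Gamma$, then reading off the coefficient of $x$ by a character-sum argument closed with Lemma~\ref{lem:rowlambda}. So first I would analyze the action of an arbitrary $g \in \Z_2^m$ with support $S = \varphi(g)$. Since the $\Z_2^m$-action on $R/\cI$ is by ring automorphisms, it suffices to track the generators via \eqref{eqn:z2maction}: each $u_i$ with $i \in S$ flips sign and is otherwise fixed, while $t_i \mapsto t_i$ for $i \notin S$ and $t_i \mapsto \mathbf{1}_i - t_i$ for $i \in S$. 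Because the $u$- and $t$-supports are disjoint, multiplying these out gives
\[
  g\cdot x = (-1)^{|\sigma \cap S|}\, u_\sigma \prod_{i \in (\omega\setminus\sigma)\setminus S} t_i \prod_{i \in (\omega\setminus\sigma)\cap S}(\mathbf{1}_i - t_i),
\]
and expanding the final product exhibits $g\cdot x$ as a signed sum of square-free monomials $u_\sigma t_W$ with $(\omega\setminus\sigma)\setminus S \subseteq W \subseteq \omega\setminus\sigma$.

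The structural point is then immediate: every monomial occurring in $g\cdot x$ keeps the $u$-part $u_\sigma$ and has its $t$-part supported inside $\omega\setminus\sigma$, so its $b$-value lies in $\omega = b(x)$, and the only term whose $b$-value is exactly $\omega$ is $x$ itself (the choice $W=\omega\setminus\sigma$), carrying the coefficient $(-1)^{|\sigma\cap S|+|(\omega\setminus\sigma)\cap S|} = (-1)^{|\omega\cap S|}$. Summing over $g \in \Gamma$, every term of $N(x)$ then has $b$-value contained in $b(x)$, so $x$ is automatically a maximal term and is the unique term of $N(x)$ of top $b$-value; its coefficient equals
\[
  \sum_{g\in\Gamma}(-1)^{|b(x)\cap \varphi(g)|} = \sum_{g\in\Gamma}(-1)^{\langle \varphi^{-1}(b(x)),\,g\rangle},
\]
where $\langle\cdot,\cdot\rangle$ denotes the standard $\Z_2$-bilinear form, the sign depending only on the parity $\langle \varphi^{-1}(b(x)),g\rangle$.

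Finally I would evaluate this as a character sum over $\Gamma = \ker\Lambda$. If $\varphi^{-1}(b(x))$ is orthogonal to every $g \in \Gamma$, all summands equal $1$ and the coefficient is $|\Gamma| = 2^{m-n}$; otherwise $g \mapsto \langle \varphi^{-1}(b(x)),g\rangle$ is a nontrivial homomorphism $\Gamma \to \Z_2$, which splits $\Gamma$ into equal halves of signs $+1$ and $-1$ so that the coefficient vanishes. By Lemma~\ref{lem:rowlambda}, orthogonality of $\varphi^{-1}(b(x))$ to all of $\ker\Lambda$ is exactly the condition $\varphi^{-1}(b(x)) \in \row\Lambda$, which settles both directions of the first assertion; and in the nonvanishing case the coefficient $2^{m-n}$ together with the uniqueness of the top-$b$ term shows $2^{m-n}x$ is the unique maximal term of $N(x)$. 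I expect the one genuine bookkeeping obstacle to be the expansion of the $t$-part and checking that no collision or cancellation occurs at the top $b$-level; the character-sum evaluation and the appeal to Lemma~\ref{lem:rowlambda} are then routine.
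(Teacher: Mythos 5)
Your proof is correct, and at its core it is the same argument as the paper's: the paper's ``only if'' step, which writes $\ker\Lambda=\langle u\rangle\oplus V$ and pairs $u_1$ with $u+u_1$ to cancel the term $x$, is precisely your observation that $g\mapsto(-1)^{\langle\varphi^{-1}(b(x)),g\rangle}$ is a nontrivial character of $\Gamma$ and hence sums to zero; your explicit coefficient $\sum_{g\in\Gamma}(-1)^{|b(x)\cap\varphi(g)|}$ just packages both directions uniformly and pins down the value $2^{m-n}$, and your top-$b$-value analysis supplies the detail behind the claim the paper dismisses as ``obvious.'' One genuine inaccuracy to fix: the $\Z_2^m$-action on $R/\cI$ is \emph{not} by ring automorphisms at the cochain level --- for instance $a_i\cdot(t_iu_i)=a_i\cdot 0=0$ while $(a_i\cdot t_i)(a_i\cdot u_i)=(\mathbf{1}_i-t_i)(-u_i)=-u_i$ --- so you cannot invoke multiplicativity wholesale to compute $g\cdot x$. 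What is true, and is all you actually use, is that the action is diagonal with respect to the tensor decomposition $C^*(D_1^1)\otimes\dotsb\otimes C^*(D_m^1)$ and the monomial $u_\sigma t_{\omega\setminus\sigma}$ is a decomposable tensor, so the factor-wise expansion is legitimate precisely because $\sigma$ and $\omega\setminus\sigma$ are disjoint (a point you already flag); with the justification restated this way, your computation of $g\cdot x$, the character-sum evaluation, and the appeal to Lemma~\ref{lem:rowlambda} go through exactly as written.
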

    \begin{proof}
        The ``if'' part comes immediately by \eqref{eqn:z2maction} and Lemma~\ref{lem:rowlambda}. For the ``only if'' part, suppose that $\varphi^{-1}(b(x)) \notin \row \Lambda$. Then by Lemma~\ref{lem:rowlambda} there exists $u\in\ker\Lambda$ such that $|b(x)\cap \varphi(u)|$ is odd. Write $\ker \Lambda = \langle u \rangle \oplus V$ for some suitable subspace $V$. Reminding that
        $$\varphi(u+u_1) = \left( \varphi(u)\cup \varphi(u_1) \right) \setminus \left( \varphi(u)\cap \varphi(u_1) \right)$$
        for any $u_1 \in V$, observe that $|b(x)\cap \varphi(u_1)|$ is even if and only if $|b(x)\cap \varphi(u+u_1)|$ is odd and vice versa. This implies that the term $x$ is eliminated in the polynomial $N(x)$. The latter statement is obvious.
    \end{proof}

    \begin{figure}
            \begin{tikzpicture}[scale=1.3]
            \draw [fill=green] (0,0) rectangle (2,2);
            \draw (1,0-.4) node {$\binom01$};
            \draw (0-.3,1) node {$\binom10$};
            \draw (1,2+.4) node {$\binom01$};
            \draw (2+.3,1) node {$\binom11$};
            \draw (1,0+.25) node {$F_2$};
            \draw (0+.25,1) node {$F_1$};
            \draw (1,2-.25) node {$F_4$};
            \draw (2-.25,1) node {$F_3$};
           % \draw (1,1) node {\red{$K^2$}};
        \end{tikzpicture}
        \caption{A characteristic function over the square.}\label{fig:k2char}
    \end{figure}
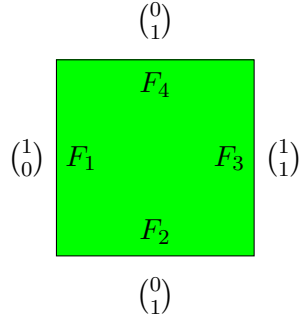
    \begin{example}
        Suppose that $P$ is a square as a simple polytope and a characteristic function $\lambda \colon \mathcal{F} \to \Z_2^2$ is given as Figure~\ref{fig:k2char}, where $\mathcal{F}$ is the facet set of $P$. One has
        \[
            \Lambda = \begin{pmatrix}
                1&0&1&0 \\
                0&1&1&1
            \end{pmatrix}
        \]
        and therefore
        $$\row\Lambda = \langle(1,0,1,0), (0,1,1,1)\rangle \text{ and } \ker\Lambda = \langle(1,1,1,0)^T , (0,1,0,1)^T\rangle.$$
        When $x=u_2t_{34}=u_2t_3t_4$, $\varphi^{-1}(b(x)) = (0,1,1,1)\in\row\Lambda$. Note that \begin{align*}
            N(x) &= u_2t_3t_4 - u_2(1-t_3)t_4 - u_2t_3(1-t_4) + u_2(1-t_3)(1-t_4) \\
            &= 4u_2t_3t_4 - 2u_2t_3 - 2u_2t_4 + u_2
        \end{align*}
        contains the term $4u_2t_3t_4 = |\Gamma|x$ as the unique maximal term.

        If $x=t_{123} = t_1t_2t_3$, then $\varphi^{-1}(b(x)) = (1,1,1,0)\notin \row\Lambda$. In this case, $N(x) = 2t_1t_3 -t_1 -t_3 + 1 $ has no term $t_{123}$.
    \end{example}
%    Now we need an additional basic fact.
%    \begin{lemma}
%        The following identity holds:
%        \[
%            (R/\cI \otimes\Q)^\Gamma = N(R/\cI \otimes \Q).
%        \]
%    \end{lemma}
%    \begin{proof}
%        The ``$\supseteq$'' part is obvious since $h\cdot N(p) = \sum_{g\in \Gamma} h\cdot(g\cdot p)= \sum_{g\in \Gamma}  (hg)\cdot p = N(p)$. Conversely, suppose that $p\in R/\cI \otimes\Q$ is fixed by the $\Gamma$-action. Then $p = N(p)/|\Gamma| \in N(R/\cI \otimes \Q)$.
%    \end{proof}
    Throughout the rest of this paper, we assume that $G$ is the coefficient ring (or group) $\Q$ or $\Z_q$ for  a positive odd integer $q$ unless otherwise mentioned. We denote by $(R/\cI\otimes G)|_\Lambda$ the graded $G$-subalgebra of $R/\cI\otimes G$ which is generated by $u_\sigma t_{\omega\setminus\sigma}$ for $\sigma\subseteq\omega\subseteq [m]$, $\sigma\in K$, and $\varphi^{-1}(b(x)) \in \row \Lambda$.
    \begin{proposition}\label{prop:RIfixedG}
        A polynomial $p\in R/\cI \otimes G$ is fixed by the $\Gamma$-action if and only if $p$ is a linear combination of polynomials of the form $N(x)$ where $x$ is a monomial in $R/\cI$ such that $\varphi^{-1}(b(x)) \in \row \Lambda$. In conclusion, there is a graded ring isomorphism
        \[
            (R/\cI\otimes G)^\Gamma \cong (R/\cI\otimes G)|_\Lambda.
        \]
    \end{proposition}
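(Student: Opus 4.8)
I would prove the two assertions in turn, reading the ring isomorphism off a precise description of the fixed subspace. Throughout write $x=u_\sigma t_{\omega\setminus\sigma}$ for a monomial, so $b(x)=\omega$, and recall that $\Gamma$ acts on $R/\cI\otimes G$ by graded ring automorphisms (from \eqref{eqn:z2maction}), so the fixed set is a graded subring. Since $|\Gamma|=2^{m-n}$ is invertible in $G$ (the point of Corollary~\ref{cor:cohomoforbitwhenGistwopower}), the standard averaging argument shows that $\tfrac1{|\Gamma|}N$ is a $G$-linear projection of $R/\cI\otimes G$ onto the $\Gamma$-fixed subspace; hence $p$ is fixed if and only if $p=\tfrac1{|\Gamma|}N(p)$, and everything reduces to understanding the image of $N$. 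The ``if'' direction is immediate: each $N(x)$ is $\Gamma$-invariant by construction, hence so is any $G$-linear combination.

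For the ``only if'' direction I would induct downward on $s(p):=\max\{|b(x)|: x\text{ a term of }p\}$, using the preorder $x\le y\Leftrightarrow b(x)\subseteq b(y)$. Group the terms of a fixed $p$ by their $b$-value, let $\omega$ be maximal under inclusion among the occurring $b$-values, and let $p_\omega$ be the corresponding homogeneous part. By \eqref{eqn:z2maction}, every term of $N(x)$ keeps the $u$-part $\sigma$ of $x$ and carries a $t$-part contained in $\omega\setminus\sigma$, so $N(x)$ is supported on monomials $\le x$; combined with the maximality of $\omega$, the $b=\omega$ part of $N(p)$ comes only from $\sum_{x\in p_\omega}c_xN(x)$, and by Lemma~\ref{lem:evenodd} it equals $|\Gamma|\,p_\omega$ when $\varphi^{-1}(\omega)\in\row\Lambda$ and $0$ otherwise. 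Since $p$ is fixed this equals the $b=\omega$ part of $|\Gamma|p$, namely $|\Gamma|p_\omega\neq 0$, which forces $\varphi^{-1}(\omega)\in\row\Lambda$. Thus every maximal $b$-value of $p$ lies in $\row\Lambda$; each $N(p_\omega)$ is then a combination of the asserted type, and $\tfrac1{|\Gamma|}N(p_\omega)=p_\omega+(\text{terms with }b\subsetneq\omega)$. Subtracting $\tfrac1{|\Gamma|}\sum_{\omega}N(p_\omega)$ over all maximal $\omega$ produces a fixed $p'$ with $s(p')<s(p)$, since all top-cardinality terms disappear and only strictly smaller $b$-values are introduced; induction closes the argument, the base case $s=0$ being the constant term with $\varphi^{-1}(\emptyset)=0\in\row\Lambda$.

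This identifies $(R/\cI\otimes G)^\Gamma$ as the free graded $G$-module on the basis $\{N(x)\}$, indexed by the monomials $x=u_\sigma t_{\omega\setminus\sigma}$ with $\sigma\in K$ and $\varphi^{-1}(b(x))\in\row\Lambda$; note $N$ preserves the cohomological degree $|\sigma|$ because the $u$-part is unchanged along the orbit sum. For the isomorphism I would use the correspondence $N(x)\leftrightarrow x$. By Lemma~\ref{lem:evenodd}, $N(x)=|\Gamma|x+(\text{terms }y\text{ with }b(y)\subsetneq b(x))$, so the change-of-basis between the $N(x)$ and the monomials $x$ is unitriangular with invertible diagonal relative to the $b$-filtration; this shows simultaneously that the $N(x)$ are independent and that the leading-term map is a graded $G$-module isomorphism of $(R/\cI\otimes G)^\Gamma$ onto the graded submodule spanned by those monomials, which is $(R/\cI\otimes G)|_\Lambda$.

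The step I expect to be the main obstacle is multiplicativity, and it is genuinely delicate: the product in $R/\cI$ of two generators of $(R/\cI\otimes G)|_\Lambda$ need not again have its $b$-value in $\row\Lambda$, so $(R/\cI\otimes G)|_\Lambda$ must be understood with the product carried over through its identification with the subring $(R/\cI\otimes G)^\Gamma$. Concretely, for generators $x,x'$ the product $N(x)N(x')$ is automatically $\Gamma$-fixed, so by the first part it re-expands in the basis $\{N(y)\}$; the work is to check that its $b$-maximal part is the monomial attached to $x\cdot x'$ after discarding the lower-order $b$-contributions, so that the correspondence intertwines the two products while preserving $|\sigma|$-degree. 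I anticipate the crux to be exactly this bookkeeping---tracking how the relations $u_it_i=u_i$, $t_iu_i=0$, $u_i^2=0$ interact with the orbit averaging $N$---with the first part supplying the indispensable tool that each such $\Gamma$-fixed product lies in the span of the $N(y)$.
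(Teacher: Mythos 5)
Your treatment of the fixed-point characterization is correct and is essentially the paper's own proof: the paper likewise takes a maximal term $x_i$ of the fixed polynomial $p_i$, uses Lemma~\ref{lem:evenodd} to force $\varphi^{-1}(b(x_i))\in\row\Lambda$, and subtracts $N(x_i)/|\Gamma|$ (legitimate since $|\Gamma|=2^{m-n}$ is a unit in $G$ by Corollary~\ref{cor:cohomoforbitwhenGistwopower}), iterating until nothing is left. Your version merely repackages this recursion as a downward induction on $s(p)=\max\{|b(x)|\}$ and makes explicit two points the paper leaves tacit: that the $b=\omega$ component of $N(p)$ equals $|\Gamma|p_\omega$ or $0$ (using that distinct monomials with the same $b$-value have distinct $u$-parts, so there are no cross-contributions), and that the unitriangular change of basis between $\{N(x)\}$ and the allowed monomials yields the additive isomorphism. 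Both refinements are sound.

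Your caution about multiplicativity is well placed, and here you actually go beyond the paper, whose proof ends with the recursion and simply asserts the ring isomorphism. You are right that the $G$-span of the allowed monomials is not closed under the product of $R/\cI\otimes G$: in the paper's own square example, $t_1t_3$ and $t_2t_3t_4$ have $b$-values $(1,0,1,0)$ and $(0,1,1,1)$ in $\row\Lambda$, yet $t_1t_3\cdot t_2t_3t_4=t_1t_2t_3t_4$ has $b$-value $(1,1,1,1)\notin\row\Lambda$. So $(R/\cI\otimes G)|_\Lambda$ carries a ring structure only via transport along the additive isomorphism (the paper's phrase ``subalgebra generated by'' must be read as ``submodule spanned by,'' with the induced product), which is all that the later applications need: Theorem~\ref{thm:main1} as a transported ring statement, Theorem~\ref{thm:cohomofsmallcover} as a purely additive one. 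One correction to the ``bookkeeping'' you anticipate: matching the $b$-maximal part of $N(x)N(x')$ with $x\cdot x'$ cannot work in general, precisely because $x\cdot x'$ may leave the span, in which case its coefficient cancels in the fixed product. In the example above the coefficient of $t_1t_2t_3t_4$ in $N(t_1t_3)N(t_2t_3t_4)$ is $0$, while the product expands in the basis $\{N(y)\}$ with nonzero coefficients on $N(t_1t_2t_4)$, $N(t_1t_3)$, $N(t_2t_3t_4)$; in particular the transported product also differs from ``multiply, then project to allowed $b$-values.'' Once the product on $(R/\cI\otimes G)|_\Lambda$ is taken to be the transported one---the only reading consistent with the paper's direct-sum decomposition---your additive isomorphism already finishes the proof, and the delicate step you feared dissolves.
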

    \begin{proof}
        The ``if'' part is obvious. For the ``only if'' part, we first put $p_0=p$. Recursively take a maximal term $x_i$ in $p_i$ and put $p_{i+1} = p_i - N(x_i)/|\Gamma|$ which makes sense because $|\Gamma|$ is a unit in $G$ (see Corollary~\ref{cor:cohomoforbitwhenGistwopower}). Note that $\varphi^{-1}(b(x_i))\in \row \Lambda$ for all $i$: otherwise $N(p_i)/|\Gamma| = p_i$ could not contain the term $x_i$ by Lemma~\ref{lem:evenodd}, which is a contradiction. This process must end in finitely many steps, finishing the proof.
    \end{proof}
    Recall that $M(K,\lambda) = \RZ_K / \Gamma$ is the orbit space of the free $\Gamma$-action on $\RZ_K$.
    Hence, by applying Theorem~\ref{thm:cohomoforbit}, we obtain
    \begin{theorem}\label{thm:main1}
        There is a graded ring isomorphism
        \[
            H^\ast(M(K,\lambda);G) \cong H((R/\cI\otimes G)|_\Lambda, d).
        \]
        \qed
    \end{theorem}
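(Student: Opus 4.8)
The plan is to assemble Theorem~\ref{thm:main1} from three ingredients already in place: the transfer isomorphism for a free action of a $2$-group (Theorem~\ref{thm:cohomoforbit} and Corollary~\ref{cor:cohomoforbitwhenGistwopower}), Cai's cochain model $R/\cI$ for $\RZ_K$, and the algebraic description of the $\Gamma$-invariants furnished by Proposition~\ref{prop:RIfixedG}. I would produce a chain of graded ring isomorphisms running from $H^\ast(M(K,\lambda);G)$ down to $H((R/\cI\otimes G)|_\Lambda,d)$.

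First I would record that, since $K$ is a star-shaped simplicial sphere, $\RZ_K$ is a closed manifold on which $\Gamma=\ker\Lambda\cong\Z_2^{m-n}$ acts freely with quotient $M(K,\lambda)$, so that $|\Gamma|=2^{m-n}$ is a power of $2$. For $G=\Q$ this puts us under the hypotheses of Theorem~\ref{thm:cohomoforbit}, while for $G=\Z_q$ with $q$ odd it puts us under those of Corollary~\ref{cor:cohomoforbitwhenGistwopower}, precisely because $|\Gamma|$ is a power of $2$. In either case we obtain a graded ring isomorphism
\[
    H^\ast(M(K,\lambda);G)\cong H^\ast(\RZ_K;G)^\Gamma .
\]

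Next I would replace the left-hand topology by Cai's algebra. His theorem gives a graded ring isomorphism $H^\ast(\RZ_K;G)\cong H(R/\cI\otimes G,d)$, and it is $\Gamma$-equivariant once one matches the topological $\Gamma$-action with the action of \eqref{eqn:z2maction} restricted to $\Gamma\subset\Z_2^m$; taking invariants yields $H^\ast(\RZ_K;G)^\Gamma\cong H(R/\cI\otimes G,d)^\Gamma$. Because $|\Gamma|$ is invertible in $G$, the averaging operator $\tfrac1{|\Gamma|}N$ is an idempotent cochain endomorphism (the $\Gamma$-action commutes with $d$, being induced by a cellular action), so it projects $H(R/\cI\otimes G,d)$ onto its invariant subring $H(R/\cI\otimes G,d)^\Gamma$.

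The final and most delicate step is to realize $H(R/\cI\otimes G,d)^\Gamma$ as the cohomology of $(R/\cI\otimes G)|_\Lambda$ by a map that is simultaneously a ring map and a cochain map; I expect this to be the main obstacle, since Proposition~\ref{prop:RIfixedG} is phrased only as a graded ring isomorphism. The key observation that unlocks it is that $d$ never shrinks the support statistic $b(\cdot)$: as $du_i=0$ and $dt_i=u_i$, differentiating a basis monomial $u_\sigma t_{\omega\setminus\sigma}$ only transfers indices from the $t$-block to the $u$-block, so every resulting term still has support $\omega$. Hence $d$ preserves the $b$-grading, and the subalgebra $(R/\cI\otimes G)|_\Lambda$ --- being the span of those monomials with $\varphi^{-1}(b(x))\in\row\Lambda$ --- is a $d$-closed sub-DG-algebra, making the inclusion $j\colon(R/\cI\otimes G)|_\Lambda\hookrightarrow R/\cI\otimes G$ a map of differential graded rings. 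I would then check that the composite $\tfrac1{|\Gamma|}N\circ j$ is a cochain isomorphism onto $(R/\cI\otimes G)^\Gamma$: it is unitriangular with respect to the $b$-order by Lemma~\ref{lem:evenodd}, and surjective onto the invariants by Proposition~\ref{prop:RIfixedG}. Consequently $j$ induces an injective ring homomorphism whose image is exactly $H(R/\cI\otimes G,d)^\Gamma$, i.e.\ a graded ring isomorphism
\[
    H((R/\cI\otimes G)|_\Lambda,d)\cong H(R/\cI\otimes G,d)^\Gamma .
\]
Composing this with the isomorphisms of the previous two paragraphs proves the theorem.
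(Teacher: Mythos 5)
You follow the paper's skeleton faithfully: the paper, too, proves Theorem~\ref{thm:main1} by composing the transfer isomorphism $H^\ast(M(K,\lambda);G)\cong H^\ast(\RZ_K;G)^\Gamma$ (Theorem~\ref{thm:cohomoforbit} together with Corollary~\ref{cor:cohomoforbitwhenGistwopower}, using $|\Gamma|=2^{m-n}$) with Cai's model and with Proposition~\ref{prop:RIfixedG}. Your intermediate observation that taking $\Gamma$-invariants commutes with taking cohomology, because $\frac1{|\Gamma|}N$ is an idempotent cochain projection when $|\Gamma|$ is invertible in $G$, is correct and is a detail the paper leaves implicit. The problem is in your final step, exactly where you flagged the delicacy.

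The claim that $(R/\cI\otimes G)|_\Lambda$ is ``a $d$-closed sub-DG-algebra, making the inclusion $j$ a map of differential graded rings'' is false: it is $d$-closed (your argument that $d$ preserves $b(\cdot)$ is right), but it is \emph{not} closed under the ambient product. The support of a nonzero product is the \emph{union} $b(x)\cup b(y)$, while $\varphi(\row\Lambda)$ is closed only under symmetric difference; when the $t$-parts overlap, the relation $t_it_i=t_i$ produces supports outside $\varphi(\row\Lambda)$. The paper's own Figure~\ref{fig:k2char} data already kills it: for $\Lambda=\left(\begin{smallmatrix}1&0&1&0\\0&1&1&1\end{smallmatrix}\right)$, both $t_1t_3$ (support vector $(1,0,1,0)$) and $t_2t_3t_4$ (support vector $(0,1,1,1)$) lie in $(R/\cI\otimes G)|_\Lambda$, but $t_1t_3\cdot t_2t_3t_4=t_1t_2t_3t_4$ has support vector $(1,1,1,1)\notin\row\Lambda$. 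Consequently your conclusion that ``$j$ induces an injective ring homomorphism whose image is exactly $H(R/\cI\otimes G,d)^\Gamma$'' fails: additively, the image of $H(j)$ is the Hochster summand $\bigoplus_{\varphi^{-1}(\omega)\in\row\Lambda}\widetilde{H}^{\ast-1}(K_\omega;G)$, which corresponds to $H^\ast(\RZ_K;G)^\Gamma$ under the support-decreasing triangular map but does not coincide with it as a subspace, and it need not even be a subring of $H^\ast(\RZ_K;G)$.

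This defect does not disappear upon passing to cohomology. Take $K$ the boundary of a pentagon with edges $12,23,34,45,51$ and $\Lambda=\left(\begin{smallmatrix}1&0&1&1&0\\0&1&1&0&1\end{smallmatrix}\right)$. The cocycles $z=u_1t_3t_4$ and $z'=u_5t_2t_3$ lie in $(R/\cI\otimes G)|_\Lambda$ (their support vectors are the two rows), yet $zz'=u_1u_5t_2t_3t_4$ survives the Stanley--Reisner ideal (since $\{1,5\}\in K$) and represents a \emph{nonzero} class in $\widetilde{H}^1(K_{[5]};\Q)\subset H^2(\RZ_K;\Q)$, while $(1,1,1,1,1)\notin\row\Lambda$ and indeed $H^2(M(K,\lambda);\Q)=0$ (this small cover is a nonorientable surface, and $\frac1{|\Gamma|}N(z)\cdot\frac1{|\Gamma|}N(z')$ lies in $H^2(\RZ_K;\Q)^\Gamma\cong H^2(M(K,\lambda);\Q)=0$). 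So restricting the cup product of $\RZ_K$ along $j$ gives the wrong multiplication. What the paper actually uses --- and what your unitriangularity and surjectivity observations do correctly establish --- is the cochain-level \emph{linear} isomorphism $x\mapsto\frac1{|\Gamma|}N(x)$ of Proposition~\ref{prop:RIfixedG} from $(R/\cI\otimes G)|_\Lambda$ onto the invariant subring; the ring structure on $(R/\cI\otimes G)|_\Lambda$ in Theorem~\ref{thm:main1} must be the one transported through this map (products of averaged elements acquire lower-support correction terms, and Lemma~\ref{lem:evenodd} is precisely what forces the offending top component to cancel), not the restriction of the product of $R/\cI\otimes G$ along the inclusion $j$. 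Replacing your last paragraph by this transport-of-structure statement repairs the argument and brings it in line with the paper's proof.
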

    For $\omega \subseteq [m]$, let us denote by $R/\cI|_\omega$ the subgroup of $R/\cI$ generated by $u_\sigma t_{\omega\setminus\sigma}$ for $\sigma\subseteq\omega \subseteq [m]$ and $\sigma\in K$. The group $R/\cI|_\omega$ is closed under the differential $d$.
    Denote by $K_\omega= \{\sigma\in K \mid \sigma \subseteq \omega \}$ the full subcomplex of $K$ with respect to $\omega$.
    %We denote by $P_\omega$ the union $P_\omega = \bigcup_{i \in \omega}F_i$. Let $K=\{\sigma\subseteq [m] \mid \bigcap_{i\in\sigma} F_i\ne \varnothing \}$ be the dual simplicial complex of $\partial P$. Then $P_\omega$ is dual to the full subcomplex $K_\omega = \{\sigma\in K \mid \sigma \subseteq \omega \}$ and thus $P_\omega$ is homotopy equivalent to the geometric realization of $K_\omega$.
    For each $\omega \subseteq [m]$, observe that there is a bijective cochain map of (co)chain complexes
    \begin{align*}
        f_\omega\colon R/\cI|_\omega \;&{\stackrel{\cong}{\longrightarrow}} \; C^\ast(K_\omega) \\
        u_\sigma t_{\omega\setminus \sigma} &\longrightarrow \sigma^\ast,
    \end{align*}
    where $C^\ast(K_\omega)$ means the simplicial cochain complex of $K_\omega$. This induces an additive isomorphism of cohomology
    \begin{equation}\label{eqn:pomegahomology}
        H^p(R/\cI|_\omega) \stackrel{\cong}{\longrightarrow} \widetilde{H}^{p-1} (K_\omega)
    \end{equation}
    and thus one concludes that there is an additive isomorphism
    \begin{equation}\label{eqn:rzkhochster}
         H^p(\RZ_K) \cong \bigoplus_{\omega\subseteq[m]} \widetilde{H}^{p-1}(K_\omega).
    \end{equation}
    See \cite[Theorem~1.6]{Ca} for details. We can give a similar argument for cohomology of $M(K,\lambda)$ with coefficient $G$. Since $(R/\cI\otimes G)|_\Lambda$ is the direct sum
    \[
        (R/\cI\otimes G)|_\Lambda = \bigoplus_{{\varphi^{-1}(\omega) \in \row \Lambda}\atop{\omega \subseteq [m] }} R/\cI|_\omega \otimes G
    \]
    as a graded abelian group, we deduce the following:
    \begin{theorem}\label{thm:cohomofsmallcover}
        There is an additive isomorphism
        \[
            H^p(M(K,\lambda);G) \cong \bigoplus_{{\varphi^{-1}(\omega) \in \row \Lambda}\atop{\omega \subseteq [m] }} \widetilde{H}^{p-1}(K_\omega;G).
        \]
        \qed
    \end{theorem}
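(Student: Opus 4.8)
The plan is to combine Theorem~\ref{thm:main1} with the direct sum decomposition of $(R/\cI\otimes G)|_\Lambda$ displayed just above, so that the whole computation reduces to the per-$\omega$ additive isomorphism \eqref{eqn:pomegahomology}. By Theorem~\ref{thm:main1} one has $H^p(M(K,\lambda);G)\cong H^p\bigl((R/\cI\otimes G)|_\Lambda,\,d\bigr)$, so it suffices to compute the cohomology of this cochain complex.

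The first and only substantive step is to promote the displayed decomposition from one of graded abelian groups to one of cochain complexes. The differential $d$ replaces a factor $t_i$ by $u_i$ and kills $u_i$, so it never changes the set $b(x)=\omega$ of subscripts occurring in a monomial; hence $d$ maps $R/\cI|_\omega$ into itself, which is precisely the assertion recorded before \eqref{eqn:pomegahomology} that $R/\cI|_\omega$ is closed under $d$. Tensoring with $G$ preserves this, and the indexing condition $\varphi^{-1}(\omega)\in\row\Lambda$ depends only on $\omega$, so the decomposition is one of cochain complexes:
\[
    \bigl((R/\cI\otimes G)|_\Lambda,\,d\bigr) = \bigoplus_{{\varphi^{-1}(\omega)\in\row\Lambda}\atop{\omega\subseteq[m]}} \bigl(R/\cI|_\omega\otimes G,\,d\bigr).
\]
Since cohomology commutes with finite direct sums of cochain complexes, this yields $H^p\bigl((R/\cI\otimes G)|_\Lambda,\,d\bigr)\cong\bigoplus_\omega H^p\bigl(R/\cI|_\omega\otimes G,\,d\bigr)$, the sum taken over $\omega$ with $\varphi^{-1}(\omega)\in\row\Lambda$.

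It remains to identify each summand with the coefficient-$G$ version of \eqref{eqn:pomegahomology}. The map $f_\omega\colon u_\sigma t_{\omega\setminus\sigma}\mapsto\sigma^\ast$ is a bijective cochain map between free $\Z$-modules, so tensoring with $G$ gives a cochain isomorphism $R/\cI|_\omega\otimes G\cong C^\ast(K_\omega)\otimes G=C^\ast(K_\omega;G)$; passing to cohomology produces $H^p(R/\cI|_\omega\otimes G)\cong\widetilde H^{p-1}(K_\omega;G)$, where the degree shift by $1$ records that a $k$-element face is a $(k-1)$-simplex and the reduced cohomology absorbs the empty face. Substituting into the previous display gives the claimed isomorphism.

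I expect no serious obstacle, since the statement is essentially an assembly of pieces already in place; the one point that must be checked carefully is that $d$ preserves $\omega=b(x)$ rather than merely the cohomological degree, so that the $\Gamma$-invariant algebra genuinely splits as a direct sum of subcomplexes and not merely as a filtered complex. A secondary caveat is that the $G$-coefficient form of \eqref{eqn:pomegahomology} is obtained simply by tensoring the integral cochain isomorphism $f_\omega$ with $G$; because $f_\omega$ is an isomorphism of free modules, no universal-coefficient correction is needed.
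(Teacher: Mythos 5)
Your proposal is correct and follows essentially the same route as the paper: the paper likewise deduces the theorem from Theorem~\ref{thm:main1} together with the decomposition $(R/\cI\otimes G)|_\Lambda = \bigoplus_{\varphi^{-1}(\omega)\in\row\Lambda} R/\cI|_\omega\otimes G$, the closedness of each $R/\cI|_\omega$ under $d$, and the per-$\omega$ identification \eqref{eqn:pomegahomology}. Your added verifications --- that $d$ preserves $b(x)=\omega$ (so the splitting is one of cochain complexes, not merely of graded groups) and that $f_\omega\otimes G$ needs no universal-coefficient correction since $f_\omega$ is an isomorphism of free $\Z$-modules --- are exactly the details the paper leaves implicit.
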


\begin{remark}
    When $G=\Q$, the formula in the above theorem coincides with that due to Suciu and Trevisan (\cite{Su1}, \cite{Trev}).
\end{remark}

%
%    We remark that from a simple argument about the transfer homomorphisms, we have
%    \begin{corollary}
%        If the cohomology of $|K_\omega|$ is torsion-free for every $\omega\subseteq [m]$ such that $\varphi^{-1}(\omega)\in \row \Lambda$, then there is no $k$-torsion where $k$ does not divide $2^{m-n}$.
%    \end{corollary}

\section{Construction of real toric manifolds with odd torsions}\label{sec:oddtorsion}
    In this section, for a given positive odd number $q$, we construct a real topological toric manifold $M$ whose cohomology has a $q$-torsion. Furthermore, $M$ is realized as the real points of a projective toric manifold.

    First of all, we need the notion of \emph{nestohedra}. See \cite{P05}, \cite{PRW}, or \cite{Z06} for details.

    \begin{definition}
    A \emph{connected building set} $\mathcal{B}$ on a finite set $S$ is a collection of nonempty subsets of $S$ such that
        \begin{enumerate}
            \item $\mathcal{B}$ contains all singletons $\{i\}$, $i\in S$ and the entire set $S$,
            \item if $I, J \in \mathcal{B}$ and $I\cap J\neq\varnothing$, then $I\cup J\in\mathcal{B}$.
        \end{enumerate}
    \end{definition}
    For a connected building set $\B$, we can assign a simple polytope called a \emph{nestohedron}:
    \begin{definition}
        Let $\B$ be a connected building set on $[n+1]=\{1,\ldots,n+1\}$. For $I\subset [n+1]$, let $\Delta_I$ be the simplex given by the convex hull of $i$-th coordinate vectors for all $i\in I$. Then define \emph{the nestohedron} $P_\B$ as the Minkowski sum of simplices
        $$ P_\B = \sum_{I\in\B} \Delta_I $$
        where the Minkowski sum of two subsets $A,\,B\subset \R^n$ is defined by
        \[
            A+B := \{a+b \mid a\in A,\: b\in B\}.
        \]
    \end{definition}

    \begin{definition}
        For a connected building set $\B$ on $[n+1]$, a subset $N\subseteq \B\setminus\{[n+1]\}$ is called a \emph{nested set} if the following holds:
        \begin{enumerate}[{(N}1)]
            \item For any $I, J\in N$ one has either $I\subseteq J$, $J \subseteq I$, or $I$ and $J$ are disjoint.\label{n1}
            \item For any collection of $k\geq 2$ disjoint subsets $J_1, \ldots, J_k \in N$, their union $J_1\cup \cdots \cup J_k$ is not in $\B$.\label{n2}
        \end{enumerate}
        The nested set complex $\Delta_\B$ is defined to be the set of all nested sets for $\B$.
    \end{definition}

    We note that $\Delta_\B$ is surely a simplicial complex.

    \begin{theorem}\cite[Theorem 7.4]{P05}\label{thm:nested}
        Let $\B$ be a connected building set on $[n+1]$. Then $P_\B$ is a simple $n$-polytope and the dual complex of the nestohedron $P_\B$ is exactly $\Delta_\B$.
    \end{theorem}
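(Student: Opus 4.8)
The plan is to realize $P_\B$ explicitly as an intersection of half-spaces, read off its facets directly from the support function of the Minkowski sum, and then match the combinatorics of how those facets meet with the two nested set axioms. First I would pin down the dimension: each simplex $\Delta_I$ lies in the affine hyperplane $\{\sum_i x_i = 1\}$, so $P_\B$ lies in $\{\sum_i x_i = |\B|\}$ and $\dim P_\B \le n$. Because $[n+1]\in\B$, the summand $\Delta_{[n+1]}$ is the full $n$-dimensional simplex, and Minkowski summing a body that is full-dimensional inside the ambient hyperplane with the remaining $\Delta_I$ keeps the sum full-dimensional; hence $\dim P_\B = n$.

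Next I would compute the support function. For $I\subseteq[n+1]$, minimizing the linear functional $x\mapsto\sum_{i\in I}x_i$ over $\Delta_J$ gives $1$ when $J\subseteq I$ and $0$ otherwise, so minimizing over the Minkowski sum gives $r(I):=|\{J\in\B: J\subseteq I\}|$. Thus
\[
    P_\B = \Bigl\{ x : \textstyle\sum_i x_i = |\B|,\ \sum_{i\in I} x_i \ge r(I)\ \text{for all } I\subseteq[n+1] \Bigr\}.
\]
I would then show that the facet-defining inequalities are exactly those indexed by the proper building set elements $I\in\B\setminus\{[n+1]\}$. If $I\notin\B$, one decomposes $I$ into the maximal elements $J_1,\dots,J_s$ of $\B$ contained in it; these are disjoint (by building set axiom (2), an overlap would produce a larger element still inside $I$), every $J\in\B$ with $J\subseteq I$ lies in a unique $J_a$, so $r(I)=\sum_a r(J_a)$ and the inequality for $I$ is redundant. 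For $I\in\B$ proper one exhibits a point of $P_\B$ saturating only that inequality, giving a genuine $(n-1)$-face. This identifies the vertex set of the dual complex with $\B\setminus\{[n+1]\}$, matching the ground set of $\Delta_\B$.

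The heart of the argument, and the step I expect to be the main obstacle, is matching face intersections with nested sets. For a collection $S=\{I_1,\dots,I_k\}\subseteq\B\setminus\{[n+1]\}$, the facets meet in the face cut out by making all $k$ inequalities tight, and I must prove this intersection is a nonempty face of codimension exactly $k$ precisely when $S$ is nested. In one direction, if two members of $S$ overlap without nesting, or a disjoint subcollection has its union in $\B$, then the tight equalities become linearly dependent or jointly infeasible, collapsing or emptying the intersection; this is exactly the content of axioms (N\ref{n1}) and (N\ref{n2}). The converse — that a nested $S$ yields a genuine codimension-$k$ face — I would handle by induction on $n$, using that each facet $F_I$ decomposes as a product $P_{\B|_I}\times P_{\B/I}$ of the nestohedra of the restriction and contraction of $\B$ along $I$, so that nested sets containing $I$ restrict to nested sets of the two factors. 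Carrying the codimension bookkeeping cleanly through this recursion is the delicate point; once it is in place, every maximal nested set has size exactly $n$ and corresponds to a vertex lying on exactly $n$ facets, which simultaneously proves that $P_\B$ is a simple $n$-polytope and that its dual boundary complex is $\Delta_\B$.
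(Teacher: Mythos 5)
Your proposal cannot be checked against an in-paper argument, because the paper gives none: Theorem~\ref{thm:nested} is imported verbatim from Postnikov \cite[Theorem 7.4]{P05}, and the only related material the paper records is the half-space description quoted as Proposition~\ref{prop:obtain_P_B} from \cite[Theorem 6.1]{Z06}. Measured against that cited literature, your outline is essentially the standard proof, and its load-bearing steps check out. The support-function computation is correct: $\min_{\Delta_J}\sum_{i\in I}x_i$ equals $1$ or $0$ according as $J\subseteq I$ or not, so the right-hand sides are $r(I)=|\{J\in\B:J\subseteq I\}|$ --- this is a sharpened version of the generic $\varepsilon_{|I|}$ in Proposition~\ref{prop:obtain_P_B}. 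Your redundancy argument for $I\notin\B$ is sound (the maximal elements of $\B$ inside $I$ partition $I$, since an overlap would violate maximality via the union axiom, and every $J\subseteq I$ in $\B$ lies in exactly one block, whence $r(I)=\sum_a r(J_a)$). Your ``infeasible'' claim in the non-nested direction can be made fully precise by a counting inequality you should state explicitly: if $I,J$ overlap without nesting then $I\cup J\in\B$ and $r(I\cup J)\ge r(I)+r(J)-r(I\cap J)+1$, so tightness at both facets forces $\sum_{i\in I\cup J}x_i<r(I\cup J)$, contradicting feasibility; the same count handles a disjoint family with union in $\B$, which is exactly axiom (N\ref{n2}). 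So non-nested collections of facets have \emph{empty} intersection, not merely a degenerate one.

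Two steps remain at sketch level and need to be filled in, though neither is a wrong idea. First, you pass from ``the support function dominates $r$'' to the claim that the $0$--$1$ inequalities alone cut out $P_\B$; this requires knowing that every facet normal of the Minkowski sum is an indicator vector, i.e.\ that the normal fan of a sum of coordinate simplices coarsens the braid fan (the generalized-permutohedron property). It is standard, but it licenses your entire H-description and should be cited or proved. Second, your induction needs the verifications that the restriction $\B|_I=\{J\in\B:J\subseteq I\}$ and the contraction $\B/I=\{J\setminus I: J\in\B,\ J\not\subseteq I\}$ are again \emph{connected} building sets (for $\B/I$, note $(J_1\setminus I)\cap(J_2\setminus I)\ne\varnothing$ forces $J_1\cap J_2\ne\varnothing$), and that nested sets of $\B$ containing $I$ factor bijectively through nested sets of the two pieces; this is precisely the product decomposition $F_I\cong P_{\B|_I}\times P_{\B/I}$ appearing in \cite{P05} and \cite{Z06}, so your route coincides with the published one rather than offering an alternative. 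With those details supplied, maximal nested sets have cardinality $n$, each vertex lies on exactly $n$ facets, and simplicity together with the identification of the dual complex with $\Delta_\B$ follows as you describe.
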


    Furthermore, we have the following visual description of $P_\B$.

    \begin{proposition}\cite[Theorem 6.1]{Z06} \label{prop:obtain_P_B}
    Let $\B$ {be} a building set on $[n+1]$. Let $\varepsilon$ be a sequence of positive numbers $\varepsilon_1\ll\varepsilon_2\ll \ldots \ll \varepsilon_n\ll \varepsilon_{n+1}$. For each $I\in\B\setminus \{[n+1]\}$, assign a half-space
    $$
    A_I=\left\{(x_1,\ldots,x_{n+1})\in \R^{n+1}\middle| \sum_{i\in I}x_i \geq \varepsilon_{|I|}\right\}
    $$
    and for $I=[n+1]$, define $A_{[n+1]}$ be the hyperplane $x_1+\ldots +x_{n+1} = \varepsilon_{n+1}$. Let $P_\varepsilon$ be the intersection $\bigcap_{I\in\B} A_I$. Then one can choose $\varepsilon$ so that $P_\varepsilon=P_\B$ whose facets are given by $F_I=\partial A_I\cap P_\varepsilon$ for each $I\in\B\setminus\{[n+1]\}$. Furthermore, $P_\B$ is a Delzant polytope, i.e., the outward normal vector $\lambda(F)$ of each facet $F$ forms a basis at each vertex of $P_\B$.
    \end{proposition}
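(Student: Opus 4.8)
The plan is to read off the facet description of $P_\B$ from its support function, match the resulting facet normals with the half-spaces $A_I$, reinterpret the cardinality-indexed $\varepsilon$ as a normalization that preserves the normal fan, and finally reduce the Delzant condition to a unimodularity statement about maximal nested sets. I would lean on Theorem~\ref{thm:nested} throughout, since it already tells me that $P_\B$ is a simple $n$-polytope whose dual complex is $\Delta_\B$; in particular the facets of $P_\B$ are in bijection with the vertices of $\Delta_\B$, i.e. with $\B\setminus\{[n+1]\}$.

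First I would compute the support function. Because the support function of a Minkowski sum is the sum of the support functions and $h_{\Delta_I}(u)=\max_{i\in I}u_i$, one gets $h_{P_\B}(u)=\sum_{I\in\B}\max_{i\in I}u_i$. Evaluating at $u=-\mathbf 1_J$ shows that the supporting inequality of $P_\B$ in the direction $-\mathbf 1_J$ is $\sum_{i\in J}x_i\ge z_J$ with $z_J=\#\{I\in\B:I\subseteq J\}$, while the ambient hyperplane is $\sum_{i\in[n+1]}x_i=|\B|$. The two building-set axioms then isolate which $J$ give facets: if $J\notin\B$, its maximal $\B$-subsets $J_1,\dots,J_k$ with $k\ge2$ are disjoint (overlapping ones would have their union in $\B$ by axiom (2), contradicting maximality) and cover $J$ (all singletons lie in $\B$), so they partition $J$; since every $\B$-member inside $J$ is contained in a single part, $z_J=\sum_c z_{J_c}$ and the $J$-inequality is the sum of the $J_c$-inequalities, hence redundant. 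Conversely each $I\in\B\setminus\{[n+1]\}$ survives, which is consistent with the facet count $|\B|-1$ from Theorem~\ref{thm:nested}. This gives the half-space description of $P_\B$ with facet $F_I$ carried by the normal $\mathbf 1_I$.

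Next I would pass from the intrinsic right-hand sides $z_I$ (which genuinely depend on $I$, not only on $|I|$) to the cardinality-indexed $\varepsilon_{|I|}$ of the statement; here ``$P_\varepsilon=P_\B$'' is understood as producing a realization of the same normal fan. I would argue by induction on $|\B|$, adding the half-spaces $A_I$ in order of increasing $|I|$, and check that the hierarchy $\varepsilon_1\ll\dots\ll\varepsilon_{n+1}$ forces each new $A_I$ to truncate exactly the face of the current polytope on which the $I$-coordinates are simultaneously minimal, creating one new facet $F_I$ and keeping the polytope simple. The bookkeeping of this step — verifying that each $A_I$ is active (non-redundant), that no spurious facets appear, and that the resulting face poset is dual to the nested set complex $\Delta_\B$ — is the main obstacle, and it is precisely where the building-set axioms interact with the $\ll$ hierarchy. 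The outcome is that $P_\varepsilon$ is a simple $n$-polytope with the same normal fan as $P_\B$ and with facets $F_I=\partial A_I\cap P_\varepsilon$ for $I\in\B\setminus\{[n+1]\}$, so Theorem~\ref{thm:nested} identifies it with the nestohedron.

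Finally, for the Delzant condition I would work at a vertex $v$, which corresponds to a maximal nested set $N$; such an $N$ has exactly $n$ elements and the facets through $v$ are the $F_I$ with $I\in N$, carrying outer normals $-\mathbf 1_I$. Delzantness then amounts to showing that $\{\mathbf 1_I:I\in N\}\cup\{\mathbf 1_{[n+1]}\}$ is a $\Z$-basis of $\Z^{n+1}$ (equivalently that $\{\mathbf 1_I:I\in N\}$ is a basis of the lattice of the hyperplane $\{\sum_i x_i=0\}$). Since $N$ is laminar by (N1), it forms a rooted forest refining $[n+1]$; moreover axiom (N2) prevents any $I\in N$ from being the disjoint union of smaller members of $N$, so each $I$ contains an element belonging to no smaller member of $N$. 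Ordering $N$ by increasing cardinality therefore makes the matrix of the $\mathbf 1_I$ unitriangular, its determinant is $\pm1$, and the normals at $v$ form a lattice basis. This establishes that $P_\B$ is Delzant and completes the proof.
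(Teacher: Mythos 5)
You should first note that the paper contains no proof of this proposition at all: it is quoted from \cite[Theorem~6.1]{Z06}, so your reconstruction can only be measured against the standard literature arguments, which your outline in fact follows. Your first and third steps are correct. The support function $h_{P_\B}(u)=\sum_{I\in\B}\max_{i\in I}u_i$, the value $z_J=\#\{I\in\B\mid I\subseteq J\}$, and the redundancy argument via the partition of any $J\notin\B$ into its maximal $\B$-subsets are all right, with one point you use silently: to know that \emph{every} facet normal of $P_\B$ is of the form $-\mathbf{1}_J$ you need that the normal fan of $P_\B$ coarsens the braid fan, which holds because each summand $\Delta_I$ has this property and Minkowski summation takes common refinements of normal fans. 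Your Delzant step is also correct, including the case of $[n+1]$ itself, where (N2) together with $[n+1]\in\B$ produces the private element; this laminarity-plus-private-element unimodularity computation is essentially the same one that makes the mod~$2$ characteristic function \eqref{eqn:lambdaofpb} of \cite{CP12} nonsingular. One slip: your parenthetical claim that $\{\mathbf{1}_I: I\in N\}$ is a basis of the lattice of $\{\sum_i x_i=0\}$ is wrong as stated, since these vectors do not lie in that hyperplane; the correct equivalent statement is that their images form a basis of the quotient lattice $\Z^{n+1}/\Z\mathbf{1}_{[n+1]}$, which is what your determinant computation actually proves. Your reading of ``$P_\varepsilon=P_\B$'' as normal equivalence is not merely permissible but forced: for $\B=\{1,2,3,4,12,13,123,234,1234\}$ one has $z_{123}=6$ but $z_{234}=4$, so no cardinality-indexed right-hand sides reproduce the Minkowski sum on the nose, even after translation.

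The genuine gap is your second step, and you flag it yourself: the induction ``add the $A_I$ in order of increasing $|I|$ and check that each cut truncates exactly the expected face'' is a plan, not a proof --- verifying non-redundancy, the absence of spurious facets, and that the resulting face poset is dual to $\Delta_\B$ is precisely the content of Zelevinsky's theorem, so as written the heart of the statement remains unproved. However, your own first step closes the gap without any induction. For $J\in\B$ define recursively $c_J=\varepsilon_{|J|}-\sum_{I\in\B,\,I\subsetneq J}c_I$; if, say, $\varepsilon_{j+1}>2^{n+1}\varepsilon_j$ for all $j$, then every $c_J>0$, so $Q:=\sum_{I\in\B}c_I\Delta_I$ is a Minkowski sum with positive coefficients and hence normally equivalent to $P_\B$. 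Applying your support-function computation verbatim to $Q$ gives support value exactly $\varepsilon_{|J|}$ in direction $-\mathbf{1}_J$ for each $J\in\B$ (by the recursion; note also $\sum_{I\in\B}c_I=\varepsilon_{n+1}$, so the affine hulls agree), while your partition argument shows the inequalities for $J\notin\B$ are redundant for $Q$. Hence $Q=P_\varepsilon$, with facets exactly $F_I=\partial A_I\cap P_\varepsilon$ for $I\in\B\setminus\{[n+1]\}$, and the Delzant property follows from your third step. With this replacement of the truncation induction, your proposal becomes a complete and correct proof.
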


    In other words, $P_\B$ can be obtained by ``cutting off'' the faces of the $n$-simplex corresponding to the elements of $\B$ except the entire set $[n+1]$. Observe that $V(\Delta_\B) = \B\setminus \{[n+1]\}$ bijectively corresponds to the cutting hyperplanes.
    We sometimes use the notation like $124 = \{1,2,4\}$ for simplicity. When $\B = \{1,2,3,4,12,23,34,123,234,1234\}$, then $P_\B$ can be obtained as in Figure~\ref{fig:nestocut}.
    \begin{figure}[h]
        \begin{center}
    \begin{tikzpicture}[thick, scale=0.4]
    \draw (8,0)--(0,6)--(-3,-6)--cycle;
    \draw[dotted](0,0)--(8,0) (0,0)--(0,6) (0,0)--(-3,-6);
    \draw (1,1) node {\Huge{$2$}}
          (1.5,-2.2) node {\Large $1$}
          (3.5,2) node {\Large $3$}
          (-1,-.5) node {\Large $4$}
          (4,3.5) node {\Large{\red{$23$}}}
          (3,-3) node {\Large{\red{$12$}}};
    \draw[color=blue, ->] (-1.5,3)..controls (-1, 3.5)..(0,3);
    \draw (-2,3) node {\Large{\red{$34$}}}
          (0,6) node {\Large{\red{$234$}}}
          (8,0) node {\Large{\red{$123$}}};
    \end{tikzpicture}
    \begin{tikzpicture}[thick, scale=0.4]
    \draw (0,0) node {\Huge{\red{$2$}}}
          (4,3.5) node {\Large{\red{$23$}}}
          (0.5,-3.5) node {\Large{\red{$12$}}}
          (0.2,4.9) node {\Large{\red{$234$}}}
          (6,0) node {\Large{\red{$123$}}};
    \draw (5.3,1)--(5.3,-0.5)--(5.7,-2.3)--(7.2,0)--(6.3,2.5)--cycle;
    \draw (1.3,4.5)--(2.3,6)--(0.6,6.3)--(-0.7,5.5)--(-1.7,4)--cycle;
    \draw (6.3,2.5)--(2.3,6)
          (5.3,1)--(1.3,4.5)
          (5.3,-0.5)--(-4.3,-4.3)
          (5.7,-2.3)--(-3.5,-6)
          (-1.7,4)--(-4.3,-4.3)
          (-3.5,-6)--(-4.3,-4.3);
    \draw[dotted] (-3.5,-6)--(-0.7,-0.5)
          (0.8,0)--(-0.7,-0.5)
          (0.8,6.3)--(0.8,0)
          (-0.7,5.5)--(-0.7,-0.5)
          (0.8,0)--(7.2,0);
    \end{tikzpicture}
    \end{center}
    \caption{A 3-simplex and a nestohedron, before and after ``cutting''}\label{fig:nestocut}
    \end{figure}
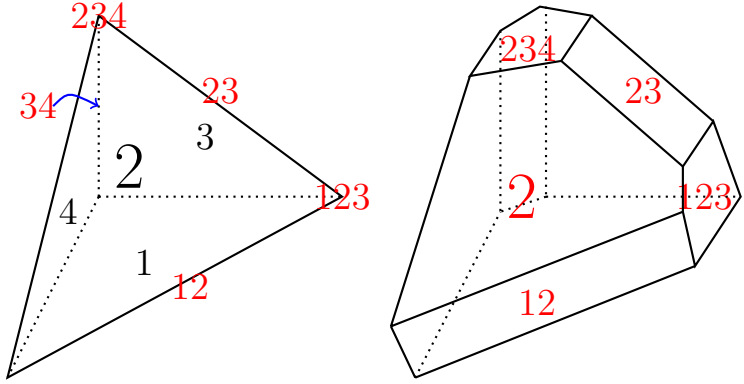

    For a set $A$ of subsets of $[n+1]$ such that the union of all elements of $A$ is the whole set $[n+1]$, one can define the connected building set \emph{generated by $A$} as the minimal connected building set on $[n+1]$ containing $A$ as a subset. Let $K$ be a given simplicial complex -- not necessarily a simplicial sphere -- on $[n+1]$. A subset $I$ of $[n+1]$ is called a \emph{non-face} of $K$ if it is not a face of $K$. It is \emph{minimal} if any proper subset of $I$ is a face of $K$. Let us denote by $\B(K)$ the connected building set generated by the minimal non-faces of $K$. The following lemma is immediately deduced by Theorem~\ref{thm:nested}.

    \begin{lemma}\label{lem:k_in_pb}
        Consider the subset $S = \{\{1\},\{2\}, \dotsc, \{n+1\}\}$ of all singletons of $V(\Delta_{\B(K)}) = {\B(K)} \setminus \{[n+1]\}$. Then the full subcomplex $\Delta_{\B(K)}|_S$ is isomorphic to $K$. \qed
    \end{lemma}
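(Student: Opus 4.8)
The plan is to exhibit the evident candidate isomorphism $\iota\colon [n+1]\to S$, $i\mapsto\{i\}$, and to prove that under $\iota$ a subset $A\subseteq[n+1]$ corresponds to a face of $\Delta_{\B(K)}|_S$ if and only if $A\in K$; this equivalence of face posets is exactly a simplicial isomorphism $\Delta_{\B(K)}|_S\cong K$. Since the faces of $\Delta_{\B(K)}$ are by definition the nested sets of $\B(K)$, the faces of the full subcomplex $\Delta_{\B(K)}|_S$ are precisely the nested sets contained in $S$, so the first step is to unwind conditions (N1) and (N2) for a collection of singletons $\iota(A)=\{\{i\}:i\in A\}$. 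Condition (N1) is automatic, as distinct singletons are disjoint. Condition (N2) requires that the union of every sub-collection of $\ge 2$ of these pairwise disjoint singletons lie outside $\B(K)$; as these unions range over all subsets $T\subseteq A$ with $|T|\ge 2$, the lemma reduces to the combinatorial equivalence
\[
    A\in K \iff \text{no } T\subseteq A \text{ with } |T|\ge 2 \text{ lies in } \B(K).
\]

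The key structural input is a description of the members of $\B(K)$, and this is where I would do the real work. I would prove that, under the assumption $[n+1]\notin K$, every element of $\B(K)$ of size $\ge 2$ is a non-face of $K$. The generators of $\B(K)$ are the singletons, the minimal non-faces of $K$, and the full set $[n+1]$; among these, the only ones of size $\ge 2$ are the minimal non-faces and $[n+1]$ (the latter being a non-face precisely because $[n+1]\notin K$). The closure rule (2) of a building set, taking $I\cup J$ when $I\cap J\neq\varnothing$, never produces anything new from singletons: two distinct singletons are disjoint, and a singleton $\{a\}$ with $a\in Y$ gives $\{a\}\cup Y=Y$; likewise $[n+1]\cup Y=[n+1]$. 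Hence every element of $\B(K)$ of size $\ge 2$ other than $[n+1]$ is an iterated overlapping union of minimal non-faces, so it contains a minimal non-face and is therefore a non-face of $K$; together with $[n+1]$ itself this yields the claim.

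With the structural fact in hand both directions are short. For the forward direction, if $\iota(A)$ is a nested set but $A\notin K$, then $A$ contains some minimal non-face $M$; since $M\in\B(K)$ with $|M|\ge 2$, taking $T=M\subseteq A$ violates (N2), a contradiction, so $A\in K$. For the reverse direction, if $A\in K$ then every $T\subseteq A$ is a face of $K$, while by the structural fact every size-$\ge 2$ member of $\B(K)$ is a non-face; hence no such $T$ lies in $\B(K)$, so (N2) holds and $\iota(A)$ is nested. This establishes the displayed equivalence and thus the isomorphism.

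The point demanding the most care — and the step I expect to be the main obstacle — is the structural description of $\B(K)$ above, together with the tacit hypothesis $[n+1]\notin K$. The full simplex $K=\Delta^n$ (equivalently $[n+1]\in K$) is genuinely degenerate: there $T=[n+1]$ is a size-$\ge 2$ face lying in $\B(K)$, so $\iota([n+1])=S$ fails (N2) and one recovers only $\partial\Delta^n$. In every other case $[n+1]$ is a non-face and the argument goes through verbatim. Since the complexes $K$ used in the sequel are chosen to carry nontrivial reduced cohomology, they are never the full simplex, so one may assume $[n+1]\notin K$ throughout without loss.
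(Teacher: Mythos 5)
Your proof is correct and fills in exactly what the paper leaves implicit: the paper deduces the lemma ``immediately'' from Theorem~\ref{thm:nested} with no further argument, and your unwinding of (N1)--(N2) for collections of singletons---via the structural fact that every member of $\B(K)$ of cardinality at least $2$ is a non-face of $K$, proved by induction on the closure operation generating $\B(K)$---is the intended route. Your caveat about the degenerate case $K=\Delta^n$ (where $[n+1]\in\B(K)$ makes $S$ itself violate (N2), so $\Delta_{\B(K)}|_S=\partial\Delta^n$ and the lemma requires the tacit hypothesis $[n+1]\notin K$) is a genuine and correct refinement that the paper glosses over, and you rightly note it is harmless for the application, where $K$ carries nontrivial reduced cohomology.
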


    \begin{remark}
        Lemma~\ref{lem:k_in_pb} is inspired by Theorem~11.11 and Theorem~11.12 of \cite{BM06}: roughly speaking, the polytope of \cite{BM06} is obtained by cutting off the faces of the $n$-simplex corresponding to the minimal non-faces of $K$ and one can further cut off some of its other faces to obtain our nestohedron $P_{\B(K)}$ (See Proposition~\ref{prop:obtain_P_B}). Our polytope $P_{\B(K)}$ has an advantage that it is uniquely determined independent of order of cutting and its face structure is well described. Furthermore, most importantly, $P_{\B(K)}$ supports a canonical real toric manifold explained later on.
    \end{remark}

    \begin{remark}
        Bosio and Meersseman used the construction of Theorem~11.11 and Theorem~11.12 of \cite{BM06} to show that the cohomology of an LV-M manifold may have arbitrary amount of torsion. One can easily observe that this construction also shows the analogous results for a moment-angle complex or a real moment-angle complex (refer \eqref{eqn:rzkhochster} for the real moment-angle complex case).
    \end{remark}

    \begin{definition}\cite[(4.1)]{CP12}
        Let us write $v_i$ for the $i$-th coordinate vector of $\Z_2^n$ for $1\le i \le n$ and $v_{n+1} = v_1 + \dotsb + v_n$.
        For given a connected building set $\B$ on $[n+1]$, define $\lambda \colon \B\setminus \{[n+1]\} \to \Z_2^n$ by
        \begin{equation}\label{eqn:lambdaofpb}
            \lambda(I) = \sum_{i \in I} v_i.
        \end{equation}
        Then $\lambda$ is a characteristic function for the polytope $P_\B$, defining a small cover of dimension $n$ denoted by $M_\R(\B)$. We call it the \emph{canonical real toric manifold} associated to $\B$. In fact, it is the real locus of the projective toric manifold defined by the Delzant polytope $P_\B$, hence it is a real toric manifold.
    \end{definition}

    For the next step, we introduce an operation of simplicial complexes called \emph{simplicial wedge operation}. Recall that for a face $\sigma$ of a simplicial complex $K$, the \emph{link} of $\sigma$ in $K$ is the subcomplex
    \[
         \link_K\sigma := \{ \tau \in K \mid \sigma\cup\tau\in K,\;\sigma\cap\tau=\varnothing\}
    \]
    and the \emph{join} of two disjoint simplicial complexes $K_1$ and $K_2$ is defined by
    \[
        K_1 \star K_2 = \{ \sigma_1 \cup \sigma_2 \mid \sigma_1 \in K_1,\; \sigma_2 \in K_2\}.
    \]
    Let $K$ be a simplicial complex with vertex set $V = [m]$ and fix a vertex $i$ in $K$. Consider a 1-simplex $I$ whose vertices are ${i_1}$ and ${i_2}$ and denote by $\partial I = \{i_1,\,i_2\}$ the 0-skeleton of $I$. Now, let us define a new simplicial complex on $m+1$ vertices, called the \emph{(simplicial) wedge} of $K$ at $i$, denoted by $\wed_{i}(K)$, by
    \[ \wed_{i}(K)= (I \star \link_K\{i\}) \cup (\partial I \star (K\setminus\{i\})), \]
    where $K\setminus\{i\}$ is the full subcomplex with $m-1$ vertices except $i$. The operation itself is called the \emph{simplicial wedge operation} or the \emph{(simplicial) wedging}. See Figure~\ref{fig:wedge}.

\begin{figure}[h]
    \begin{tikzpicture}[scale=.55]
        \coordinate [label=below:$1$](11) at (-9.8,0.6);
        \coordinate [label=right:$2$](22) at (-4,2.5);
        \coordinate [label=above:$3$](33) at (-6.5,4);
        \coordinate [label=above:$4$](44) at (-10.5,4);
        \coordinate [label=left:$5$](55) at (-12,2.5);
        \draw (-2,2.2) node {$\longrightarrow$};
        \coordinate [label={[xshift=-2.8pt,yshift=2.8pt]below:$1_1$}](1_1) at (2.2,0);
        \coordinate [label={[xshift=2.8pt,yshift=2.8pt]above:$1_2$}](1_2) at (2.2,2.2);
        \coordinate [label=right:$2$](2) at (8,3);
        \coordinate [label=above:$3$](3) at (5.5,4.5);
        \coordinate [label=above:$4$](4) at (1.5,4.5);
        \coordinate [label=left:$5$](5) at (0,3);
        \draw (-8,-1) node {$K$}
            (4,-1) node{$\wed_1(K)$};
        \draw [ultra thick] (11)--(22)--(33)--(44)--(55)--cycle
            (1_1)--(2)--(3)--(4)--(5)--cycle
            (1_2)--(1_1)
            (1_2)--(2)
            (1_2)--(3)
            (1_2)--(4)
            (1_2)--(5);
        \draw [thick, densely dashed] (4)--(1_1)--(3);
    \end{tikzpicture}
    \caption{Illustration of a wedge of $K$}\label{fig:wedge}
\end{figure}
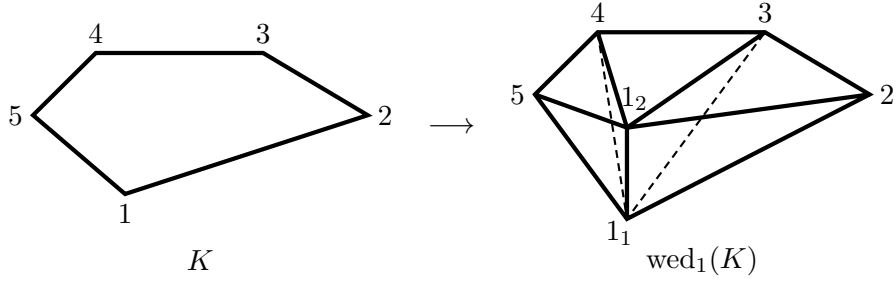

    We briefly present the notion of $K(J)$ of \cite{BBCG10} here. Let $K$ be a simplicial complex on vertices $V=[m]$. Note that a simplicial complex is determined by its minimal non-faces. Assign a positive integer $j_i$ to each vertex $i\in V$ and write $J=(j_1, \ldots, j_m)$. Denote by $K(J)$ the simplicial complex on vertices
    \[
        \{ \underbrace{1_1,1_2,\ldots,1_{j_1}},\underbrace{{2_1},2_2,\ldots,{2_{j_2}}},\ldots, \underbrace{{m_1},\ldots,{m_{j_m}}} \}
    \]
    with minimal non-faces
    \[
        \{ \underbrace{{(i_1)_1},\ldots,{(i_1)_{j_{i_1}}}},\underbrace{{(i_2)_1},\ldots,
        {(i_2)_{j_{i_2}}}},\ldots, \underbrace{{(i_k)_1},\ldots,{(i_k)_{j_{i_k}}}} \}
    \]
    for each minimal non-face $\{{i_1},\ldots,{i_k}\}$ of $K$. It is an easy observation to show that $\wed_{i}(K)=K(J)$ where $J$ is the $m$-tuple $(1,\ldots,1,2,1,\ldots,1)$ with 2 as the $i$-th entry. By consecutive application of wedgings on $K$, we can produce $K(J)$ for any $J$. Refer \cite{CP13} for more information about relations of wedge operations and toric objects. For the special case $J=(2,2,\dotsc,2)$, we write $K(J)=K(2,2,\dotsc,2)=: K'$. Note that $K'$ is obtained by wedging each vertex of $K$ once.

    \begin{theorem}\label{thm:smallcoverwithtorsion}
        Let $q$ be a positive odd number. Then there is a real toric manifold $M$, which is also a small cover, such that $H^\ast(M;\Z)$ has a $q$-torsion element.
    \end{theorem}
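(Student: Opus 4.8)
The plan is to exhibit an explicit star-shaped sphere $K$ together with the canonical characteristic function of a nestohedron so that a torsion-carrying full subcomplex becomes \emph{visible} in the small cover, and then to read off the torsion from Theorem~\ref{thm:cohomofsmallcover}. The first step is a reduction. Since $|\Gamma| = 2^{m-n}$ is a power of $2$, it is invertible in any coefficient ring in which $2$ is a unit; hence the proof of Proposition~\ref{prop:RIfixedG} and the transfer argument behind Corollary~\ref{cor:cohomoforbitwhenGistwopower} both go through over $\Z[1/2]$, giving
\[
    H^\ast(M(K,\lambda);\Z[1/2]) \cong \bigoplus_{{\varphi^{-1}(\omega)\in\row\Lambda}\atop{\omega\subseteq[m]}} \widetilde H^{\ast-1}(K_\omega;\Z[1/2]).
\]
Because $q$ is odd, $\Z[1/2]$-coefficients detect $q$-torsion exactly as $\Z$-coefficients do. Thus it suffices to produce $K$, a canonical $\lambda$, and a subset $\omega$ with $\varphi^{-1}(\omega)\in\row\Lambda$ for which $\widetilde H^\ast(K_\omega;\Z)$ has a $\Z/q$-summand: such a summand then persists in $H^\ast(M;\Z[1/2])$, and hence $H^\ast(M;\Z)$ carries a $q$-torsion element.

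To build such data, I would start from any finite simplicial complex $L$ with $\Z/q\subseteq\widetilde H^\ast(L;\Z)$ (these exist for every $q$) and pass to its full doubling $L' = L(2,\dotsc,2)$. I then set $\B = \B(L')$ and let $M = M_\R(\B)$ be the canonical real toric manifold over the nestohedron $P_\B$; by Proposition~\ref{prop:obtain_P_B} and Theorem~\ref{thm:nested}, $P_\B$ is a Delzant simple polytope, so $M$ is a small cover and the real locus of the projective toric manifold attached to $P_\B$. By Lemma~\ref{lem:k_in_pb} the full subcomplex on the set $S$ of singletons of $V(\Delta_\B) = \B\setminus\{[n+1]\}$ is isomorphic to $L'$, so the natural candidate is $\omega = S$.

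The crux is to verify $\varphi^{-1}(S)\in\row\Lambda$ for the canonical characteristic matrix $\Lambda$ coming from \eqref{eqn:lambdaofpb}. A direct computation from \eqref{eqn:lambdaofpb} shows the rows of $\Lambda$ are the vectors $r_1,\dotsc,r_n$ with $(r_k)_I = [k\in I]+[n+1\in I]$ for $I\in\B\setminus\{[n+1]\}$. When $n$ is odd, $r_1+\dotsb+r_n$ is the vector whose $I$-entry is $|I|\bmod 2$, and this equals $\varphi^{-1}(S)$ precisely when every non-singleton member of $\B\setminus\{[n+1]\}$ has even cardinality. The doubling makes both conditions automatic: the ground set of $L'$ has size $|V(L')| = 2\,|V(L)|$, so $n+1$ is even and $n$ is odd; and every minimal non-face of $L'$ is a union of the atoms $\{a_1,a_2\}$, whence the whole building set $\B(L')$ (closed only under unions of intersecting members) consists, apart from singletons, of unions of such atoms and therefore of even-cardinality sets. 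I expect this parity matching --- that the full doubling is exactly what forces the singleton subcomplex into the row space of the canonical (real-locus) characteristic function --- to be the main point of the argument.

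It remains to confirm that $L' = (\Delta_\B)_S$ still carries the $q$-torsion. This follows from the fact that full doubling shifts reduced cohomology up in degree by $|V(L)|$ while preserving its torsion; conceptually one sees this from the homeomorphism of polyhedral products $\RZ_{L'}\cong\mathcal{Z}_L$ together with the Hochster-type decomposition of $H^\ast(\mathcal{Z}_L;\Z)$, whose top full-subcomplex summand is $\widetilde H^{\ast-|V(L)|-1}(L;\Z)$ and hence retains the $\Z/q$. Granting this, $\widetilde H^\ast(L';\Z)$ has a $\Z/q$-summand, so by the reduction above $H^\ast(M;\Z)$ has a $q$-torsion element, proving the theorem. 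Finally, letting $L$ range over the infinitely many pairwise non-homotopy-equivalent complexes with $q$-torsion (or taking iterated joins) yields infinitely many such $M$, as claimed in the introduction.
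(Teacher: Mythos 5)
Your construction is the paper's own: start from a complex $L$ with $q$-torsion, pass to the doubling $L'=L(2,\dotsc,2)$, take the canonical real toric manifold over the nestohedron $P_{\B(L')}$, and verify $\varphi^{-1}(S')\in\row\Lambda$ by exactly the paper's parity argument (every non-singleton member of $\B(L')\setminus\{[n+1]\}$ is a union of doubled pairs, hence of even cardinality, and $n=2|V(L)|-1$ is odd, so the sum of the rows $(r_k)_I=[k\in I]+[n+1\in I]$ is the indicator of the singletons); Lemma~\ref{lem:k_in_pb} then exhibits $L'$ as the relevant full subcomplex. You differ only in the final detection step, and there your route is arguably cleaner: the paper first reduces to $q=p^k$, applies Theorem~\ref{thm:cohomofsmallcover} with $G=\Q$ and $G=\Z_{p^k}$, and extracts integral torsion by comparing $\rank_\Q H_\ast(M;\Q)$ with $\rank_{\Z_{p^k}}H_\ast(M;\Z_{p^k})$ via the universal coefficient theorem; you instead observe that the transfer and Proposition~\ref{prop:RIfixedG} need only that $|\Gamma|=2^{m-n}$ be invertible, hence hold over $\Z[1/2]$, and read off the $\Z/q$-summand in $H^\ast(M;\Z[1/2])\cong H^\ast(M;\Z)\otimes\Z[1/2]$ directly. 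This is sound ($\Z[1/2]$ is flat over $\Z$, and the idempotent $\frac{1}{|\Gamma|}\sum_{g\in\Gamma}g$ still splits off the invariant subcomplex, so cohomology of invariants equals invariants of cohomology), and it dispenses with both the prime-power reduction and the rank bookkeeping.

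One soft spot: your justification that $\widetilde H^\ast(L';\Z)$ retains the $q$-torsion. The homeomorphism $\RZ_{L'}\cong\mathcal{Z}_L$ together with the two Hochster-type decompositions only shows that \emph{some} full subcomplex $L'_\omega$ carries the torsion, since the summands of the two decompositions are not matched termwise a priori --- and your row-space verification was carried out only for $\omega=S'$, i.e.\ for $L'$ itself, so this ``conceptual'' argument does not by itself place the torsion where you need it. The clean fact, which the paper invokes, is that each wedge is topologically a suspension, $|\wed_v K|\cong S|K|$, so $|L'|\cong S^{|V(L)|}|L|$ and $\widetilde H^{\ast+|V(L)|}(L';\Z)\cong\widetilde H^\ast(L;\Z)$. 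Since you asserted precisely this degree-shift statement (and it is true), the lapse is in the justification offered, not in the claim, and the proof goes through once the suspension identity replaces the moment-angle heuristic.
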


    \begin{proof}[proof of Theorem~\ref{thm:smallcoverwithtorsion}]
        It suffices to show when $q=p^k$ a power of an odd prime $p$. We take a simplicial complex $K$ with vertex set $[m]$ so that $H^\ast(K;\Z)$ has a $q$-torsion element. For example, one can use a triangulation of the mod $q$ Moore space $M(q,1)$. Then $K'$ has the cohomology with a $q$-torsion, since an wedge of $K$ is topologically the suspension
        \[
            |\wed_v K| \cong S|K|,\text{ for any $v\in [m]$,}
        \]
        and the suspension preserves torsion elements (possibly degree shifted) of cohomology. Now consider the connected building sets $\B(K)$ and $\B(K')$. Recall that $\B(K)$ is generated by the minimal non-faces of $K$. With the notation
        $$S = \{\{1\},\{2\}, \dotsc, \{m\}\}$$
        and
        $$S' = \{\{1_1\},\{1_2\},\{2_1\},\{2_2\},\dotsc,\{m_1\},\{m_2\}\},$$
        we see that $S \subset \B(K)$, $S'\subset \B(K')$, and there is a bijection
        \[
            \B(K)\setminus S \to \B(K')\setminus S'
        \]
        \[
            i_1i_2\dotsb i_\ell \mapsto (i_1)_1(i_1)_2(i_2)_1(i_2)_2\dotsb (i_\ell)_1(i_\ell)_2.
        \]
        Consider the characteristic matrix $\Lambda$ defined by \eqref{eqn:lambdaofpb} for the canonical real toric manifold $M = M_\R(\B(K'))$. The sum of all the $2m$ rows corresponds to the set $S'$ by the observation that every element of $\B(K')\setminus S'$ has even cardinality. By Lemma~\ref{lem:k_in_pb} and Theorem~\ref{thm:cohomofsmallcover}, the following inequality
        \[
            \rank_\Q H_\ast(M;\Q) < \rank_{\Z_{p^k}} H_\ast(M;\Z_{p^k})
        \]
        holds where the right hand side means the number of summands $\Z_{p^k}$ in $H_\ast(M;\Z_{p^k})$. A simple application of the universal coefficient theorem concludes the proof. More precisely, forgetting the grading, decompose the abelian group
        \[
            H_\ast(M;\Z) \cong \Z^b \oplus \Z_p^{c_1} \oplus \Z_{p^2}^{c_2} \oplus \dotsb \oplus \Z_{p^i}^{c_i} \oplus \dotsb \oplus \text{(other factors)}.
        \]
        Applying the universal coefficient theorem
        \[
            H_n(M;G) \cong H_n(M)\otimes G \oplus \Tor(H_{n-1}(M),G),
        \]
        we deduce that
        \[
            \rank_{\Z_{p^k}} H_\ast(M;\Z_{p^k}) = b+2c_k + 2c_{k+1}+2c_{k+2}+\dotsb.
        \]
        Since $\rank_\Q H_\ast(M;\Q) = b$, $c_i$ is nonzero for some $i\ge k$, which means that there is a $p^k$-torsion in $H_\ast(M;\Z)$.

%        and Proposition~\ref{prop:RIfixedG} when $G=\Z_p$, one obtains a nonzero element in $H^\ast(\RZ_{\Delta_{\B(K')}};\Z_p)$ fixed by the $\Gamma$-action. By the universal coefficient theorem, one gets a $p$-torsion element of $H^\ast(\RZ_{\Delta_{\B(K')}};\Z)^\Gamma$. The proof is completed by Corollary~\ref{cor:torsionofcover}.
    \end{proof}

    \begin{remark}
        There is an important subclass of the set of nestohedra, called graph associahedra (See \cite{CP12}, \cite{P05}, or \cite{PRW}). From results of \cite{CP12}, one can conclude that the canonical real toric manifold $H^\ast(M_\R(P);\Z)$ has no odd torsion when $P$ is a graph associahedron since for any $\omega\subseteq [m]$ such that $\varphi^{-1}(\omega) \in \row \Lambda$, the simplicial complex $K_\omega$ is a wedge of spheres. We remark that Theorem~\ref{thm:smallcoverwithtorsion} shows that an analogous result of this does not hold for general nestohedra instead of graph associahedra.
    \end{remark}

%    \begin{example}
%         Let us follow some part of the process of the proof of Theorem~\ref{thm:smallcoverwithtorsion} for small $K$, even though $H^\ast(K)$ has no torsion. Let $K = \{1,2,3,12,13,23,4\} =   \partial\Delta^2 \amalg \{\ast\}$ be a simplicial complex. Then the set of minimal non-faces is $\{14,24,34,123\}$ and thus
%         \[
%            \B(K) = \{1,2,3,4,14,24,34,123,124,134,234,1234              \}.
%         \]
%         The minimal non-faces of $K'$ are $1_11_24_14_2, 2_12_24_14_2, 3_13_24_14_2, 1_11_22_12_23_13_2$ and
%         \begin{multline*}
%            \B(K) = \{ 1_1,1_2,2_1,2_2,3_1,3_2,4_1,4_2, 1_11_24_14_2, 2_12_24_14_2, 3_13_24_14_2,                 \\
%            1_11_22_12_23_13_2, 1_11_22_12_24_14_2,1_11_23_13_24_14_2, 2_12_23_13_24_14_2,1_11_22_12_23_13_24_14_2    \}.
%         \end{multline*}
%         Note that every element of $\B(K')$ has even cardinality except singletons.
%    \end{example}
\bigskip

\end{document}